\newcommand{\mn}{\mathfrak n }
\newcommand{\mz}{\mathfrak z }
\newcommand{\mv}{\mathfrak v }
\newcommand{\mh}{\mathfrak h }
\newcommand{\so}{\mathfrak{so} }
\newcommand{\bil}{g}
\newcommand{\lra}{\longrightarrow}
\newcommand{\bs}{\backslash}
\newcommand{\R}{\mathbb R}
\newcommand{\Z}{\mathbb Z}
\DeclareMathOperator{\ad}{ad}
\DeclareMathOperator{\tr}{tr}
\numberwithin{equation}{section}
 \newtheorem{teo}{Theorem}[section]
 \newtheorem{pro}[teo]{Proposition}
 \newtheorem{lm}[teo]{Lemma}
 \newtheorem{defi}[teo]{Definition}
 \theoremstyle{definition}
 \newtheorem{remark}[teo]{Remark}
\newcommand{\nc}{\newcommand}
\nc{\Iso}{\operatorname{Iso}}
 \nc{\iso}{\mathfrak{iso}}
 \nc{\sso}{\mathfrak{so}}
\nc{\Ad}{\operatorname{Ad}} 
\nc{\Sym}{\mathrm{Sym}}
  \nc{\pr}{\operatorname{pr}} 
 \nc{\Dera}{\operatorname{Dera}} \nc{\Auto}{\operatorname{Auto}}
 \nc{\LL}{{\rm L}}
\nc{\dd}{{\rm d}}
\nc{\Id}{{\rm Id}}
\begin{document}

\title{Conformal Killing forms on $2$-step nilpotent Riemannian Lie groups}
\author{Viviana del Barco}
\address{Universidad Nacional de Rosario, CONICET, 2000, Rosario, Argentina}
\email{delbarc@fceia.unr.edu.ar}

\author{Andrei Moroianu}
\address{Université Paris-Saclay, CNRS,  Laboratoire de mathématiques d'Orsay, 91405, Orsay, France}
\email{andrei.moroianu@math.cnrs.fr}

\begin{abstract} We study  left-invariant conformal Killing $2$- or $3$-forms on simply connected $2$-step nilpotent Riemannian Lie groups. We show that if the center of the group is of dimension greater than or equal to 4, then every such form is automatically coclosed (i.e. it is a Killing form). In addition, we prove that the only Riemannian 2-step nilpotent Lie groups with center of dimension at most 3 and admitting left-invariant non-coclosed conformal Killing $2$- and $3$-forms are: the Heisenberg Lie groups and their trivial 1-dimensional extensions, endowed with any left-invariant metric, and the simply connected Lie group corresponding to the free 2-step nilpotent Lie algebra on 3 generators, with a particular 1-parameter family of metrics. The explicit description of the space of conformal Killing $2$- and $3$-forms is provided in each case.
\end{abstract}

\subjclass[2010]{53D25, 22E25, 53C30} 
\keywords{Conformal Killing forms, $2$-step nilpotent Lie groups, Riemannian Lie groups} 
\maketitle

\section{Introduction}
Killing and conformal Killing forms on Riemannian manifolds are natural extensions to higher degrees of the notions of Killing and conformal vector fields. Killing forms were introduced in the 50's by Yano \cite{Ya} and conformal Killing forms were introduced a few years later by Kashiwada \cite{Ka} and Tachibana \cite{Ta},  but their modern systematic study only began in 2003 with the work of Semmelmann \cite{uwe}.

By definition, a conformal Killing form on a Riemannian manifold (also called twistor form or conformal Killing-Yano form by some authors) is an exterior $p$-form whose covariant derivative with respect to the Levi-Civita connection is determined by its exterior derivative and its co-differential (see Definition~\ref{d31} below). A conformal Killing form whose co-differential vanishes is called a Killing form, and in order to distinguish this case, a conformal Killing form is called strict if its co-differential is non-zero.

Most classification results for Killing and conformal Killing forms were obtained on compact Riemannian manifolds with special holonomy. More precisely, Killing forms were classified on compact Kähler manifolds \cite{Yama}, on compact symmetric spaces \cite{bms}, on compact quaternion-Kähler manifolds \cite{ms2}, and compact manifolds with holonomy $\mathrm{G}_2$ or Spin$(7)$ \cite{uwe1}. Moreover, it was shown in \cite{ms} that conformal Killing forms on Riemannian products of compact manifolds are determined by Killing forms on the factors, and in \cite{ms1}, \cite{uwe4} that conformal Killing forms on compact Kähler manifolds are necessarily of type $(p,p)$ and they are in 1-1 correspondence with the Hamiltonian 2-forms introduced by Apostolov, Calderbank, Gauduchon and Tonnesen-Friedman \cite{ACG1},  \cite{ACG2}, \cite{ACG3}.

If the manifold has generic holonomy, further geometric restrictions are required in order to obtain classification results. We mention \cite{uwe3}, where Killing forms are studied on compact nearly Kähler 6-manifolds, and \cite{a}, where compact manifolds carrying Killing 1-forms whose  exterior derivative is conformal Killing are classified. 

More recently, (conformal) Killing forms have been studied on Riemannian Lie groups, under the left-invariance assumption. In this case, the problem reduces to an algebraic system on the Lie algebra, which however is rather complicated, and classification results can be obtained only under further assumptions. 

In \cite{HO} Herrera and Origlia study 5-dimensional metric Lie algebras carrying strict conformal Killing 2-forms and obtain their classification when the center is of dimension at least $2$. In
\cite{AD} Andrada and Dotti show that a metric Lie algebra carries a strict conformal Killing 2-form whose co-differential is the metric dual of a central element if and only if  the Lie algebra is a central extension of an even-dimensional Lie algebra carrying a non-degenerate Killing 2-form, whose inverse (as endomorphism) corresponds to a closed 2-form. 

More general results can be obtained on 2-step nilpotent metric Lie algebras. Killing 2-forms on such Lie algebras were first studied in \cite{BDS}, and the classification of 2-step nilpotent metric Lie algebras admitting Killing 2-forms and 3-forms was given in \cite{va} (see also \cite{AD2} for the case of 2-forms). Killing $p$-forms for $p\ge4$ were later considered in \cite{va2}, where the classification of 2-step nilpotent metric Lie algebras carrying such objects is obtained under the additional assumption that the center of the Lie algebra has dimension at most 2. 

In the present paper we pursue our quest by studying conformal Killing forms of degree at most 3 on 2-step nilpotent metric Lie algebras. Since every conformal Killing 1-form is the metric dual of a conformal vector field, and is automatically Killing on any Lie algebra, we only study the degrees 2 and 3. 

In Theorem~\ref{teo:2forms} we show that every conformal Killing 2-form on a 2-step nilpotent metric Lie algebra is automatically Killing except on the Heisenberg Lie algebras $\mh_{2q+1}$ where, for every metric, the space of conformal Killing 2-forms is 1-dimensional (and every Killing $2$-form vanishes). Then, in Theorem~\ref{teo:3forms} we prove that the space of conformal Killing 3-forms coincides with the space of Killing $3$-forms, except for the trivial extensions $\R\oplus\mh_{2q+1}$ of  Heisenberg Lie algebras,  endowed with  arbitrary metrics, and for a 1-parameter family of metrics on the free 2-step nilpotent Lie algebra on $3$-generators $\mn_{3,2}$. In these two exceptional cases we show that the space of conformal Killing $3$-forms has dimension 2  and the space of Killing 3-forms is 1-dimensional.

\section{Preliminaries: Riemannian geometry of $2$-step nilpotent Lie groups}\label{sec:prel}
This section includes basic material on the geometry of $2$-step nilpotent Lie groups endowed with a Riemannian metric which is invariant under left-translations. 

Let $N$ be a connected Lie group and let $g$ be a left-invariant Riemannian metric on $N$. Each element $a$ of the Lie group defines a left-translation $L_a:N\lra N$ with $h\mapsto L_a(h)=ah$, which is an isometry of the metric. Therefore, $g$ is determined by its value on the tangent space at the identity, which is identified with the Lie algebra $\mn$ of $N$. Denote by $\nabla$ the Levi-Civita connection of $(N,g)$. 

For every left-invariant vector fields $X,Y,Z$ on $N$, Koszul's formula  reads
\begin{equation}\label{eq.Koszul}
g( \nabla_X Y,Z)=\frac12\{ g( [X,Y],Z)+g( [Z,X],Y)+g( [Z,Y],X)\}.
\end{equation}
This shows, in particular, that that the Levi-Civita connection preserves left-invariance, that is, the covariant derivative of a left-invariant vector field with respect to another left-invariant vector field is again left-invariant. 

From now on we will identify left-invariant vector fields  $X$ on $N$ with their values $x\in\mn$ at the identity. The Levi-Civita covariant derivative defines a linear map $\nabla:\mn\lra\mathfrak{so}(\mn)$, which by \eqref{eq.Koszul} satisfies
\begin{equation}\label{eq.Koszul1}
\nabla_xy=\frac12\left( [x,y]-\ad_x^*y-\ad_y^*x\right), \quad  x,y\in \mn,
\end{equation}
where $\ad_x^*$ denotes the adjoint of $\ad_x$ with respect to $g$.

Recall that a Lie algebra $\mn$ is said to be $2$-step nilpotent if it is not abelian and $\ad_x^2=0$ for all $x\in\mn$. Equivalently, $\mn$ is $2$-step nilpotent if its commutator $\mn':=[\mn,\mn]$ is non-trivial and is contained in the center $\mz$ of $\mn$.
 
For the rest of the section we assume that $N$ is a simply connected Lie group correspon\-ding to a $2$-step nilpotent Lie algebra $\mn$. The main geometric properties of $(N,g)$ will be described through linear objects in the metric Lie algebra $(\mn,g)$, following \cite{EB}.

Let $\mv$ be the orthogonal complement of $\mz$ in $\mn$. Since $\mn$ is 2-step nilpotent, the or\-thogonal decomposition as a sum of vector spaces $\mn=\mv\oplus\mz$ is non-trivial. Each $z\in \mz$ defines a skew-symmetric endomorphism $j(z):\mv\lra\mv$ by the formula
\begin{equation}\label{eq:jota}
g(j(z)x,y) :=g(z,[x,y]) \quad \mbox{ for all } x,y\in\mv.
\end{equation}
These endomorphisms verify
\begin{equation}
\label{int}\bigcap_{z\in\mz}\ker j(z)=0.
\end{equation}
Indeed, if $x\in \mv$ is in $\ker j(z)$ for all $z\in \mz$, then \eqref{eq:jota} implies that $x$ lies also in $\mz$, so $x=0$. Moreover, since $\mn'\subseteq\mz$, the Lie algebra structure of $\mn$ is completely determined by the map $j: \mz \lra \sso(\mv)$.

Using this linear map, we can also describe important geometric data of the Riemannian manifold $(N,\bil)$. For instance, after \eqref{eq.Koszul1}, the covariant derivative verifies
\begin{equation}\label{eq:nabla}\left\{
\begin{array}{ll}
\nabla_x y=\frac12 \,[x,y] & \mbox{ if } x,y\in\mv,\\
\nabla_x z=\nabla_zx=-\frac12 j(z)x & \mbox{ if } x\in\mv,\,z\in\mz,\\
\nabla_z z'=0& \mbox{ if } z, z'\in\mz.
\end{array}\right.
\end{equation}

We can thus observe the following behaviour of the  covariant derivative
\begin{equation}\label{nablax} \nabla_x\mv\subset\mz,\qquad\nabla_x\mz\subset\mv,\qquad \mbox{ for all } x\in\mv,
\end{equation}and 
\begin{equation}\label{nablaz} \nabla_z\mv\subset\mv,\qquad\nabla_z\mz=0,\qquad \mbox{ for all } z\in\mz.
\end{equation}

\section{Conformal Killing forms on Lie groups}
In this section we introduce our object of study, namely, conformal Killing forms on Riemannian manifolds. Afterwards, we describe basic properties of left-invariant conformal Killing forms on Riemannian Lie groups.

Let $(M,g)$ be an oriented $n$-dimensional Riemannian manifold and denote by $\nabla$ the Levi-Civita connection, by $\dd$ the exterior derivative and by $\dd^*$ its formal adjoint. Recall that, with respect to a local orthonormal frame $e_1, \ldots, e_n$ of $({\rm T}M,g)$, the operator $\dd^*$ reads
\[
\dd^*=-\sum_{i=1}^n e_i\lrcorner\ \nabla_{e_i}.
\]

\begin{defi}[\cite{uwe}]\label{d31}
A {\em conformal Killing} $k$-form on $(M,g)$ is an exterior $k$-form $\alpha$ that satisfies 
\begin{equation}\label{ck}
\nabla_Y \alpha=\frac1{k+1}Y\lrcorner\ \dd\alpha-\frac{1}{n-k+1} Y^\flat\wedge \dd^*\alpha
\end{equation}
for every vector field $Y$ in $M$. Here $Y^\flat$ denotes the metric dual $1$-form of $Y$, i.e. \linebreak $Y^\flat=g( Y, \cdot)$.
\end{defi}

By applying the Hodge duality to formula \eqref{ck} one can readily check that $\alpha$ is conformal Killing form if and only its Hodge dual $*\alpha$ is conformal Killing.

Conformal Killing forms on $(M,g)$ which are coclosed are called {\em Killing forms}. Notice that, given a vector field $X$ on $M$, $X^\flat$ is a conformal Killing (respectively Killing) $1$-form if and only if $X$ is a conformal (respectively Killing) vector field. Most of the time, we will identify vector fields with their metric dual 1-forms.

Let now $N$ be a Lie group with Lie algebra $\mn$ and let $g$ be a left-invariant metric on $N$. Left-invariant differential $k$-forms on $N$, that is, invariant under $L_a$ for all $a\in N$, can be identified to elements in $\Lambda^k\mn^*$. Since $\dd$ and $\dd^*$  preserve left-invariance, they define linear  operators on $\Lambda^*\mn^*$, which we denote with the same symbols for simplicity. In particular, the linear operator $\dd:\Lambda^k\mn^*\lra \Lambda^{k+1}\mn^*$ is the Lie algebra differential, and  $\dd^*$  is  the metric adjoint of $\dd$ as soon as $\mn$ is unimodular. An element $\alpha\in \Lambda^k\mn^*$ corresponds to a left-invariant conformal Killing form on $(N,g)$ if and only if 
\begin{equation}\label{ckinv}
\nabla_y \alpha=\frac1{k+1}\ y\lrcorner\ \dd\alpha-\frac{1}{n-k+1} y\wedge \dd^*\alpha, \quad \mbox{ for all } y\in \mn.
\end{equation}
Along the text, we will say that $\alpha\in\Lambda^k\mn^*$ is a conformal Killing form if it satisfies \eqref{ckinv}, and that it is a Killing form if it satisfies the additional condition $\dd^*\alpha=0$. We will denote by $\mathcal{CK}^k(\mn,g)\subseteq \Lambda^k\mn^*$ the space of conformal Killing $k$-forms on $(\mn,g)$ and by $\mathcal{K}^k(\mn,g)\subseteq \mathcal{CK}^k(\mn,g)$ the space of Killing $k$-forms.

Every skew-symmetric endomorphism $\mn\simeq\mn^*$ extends as a derivation of the exterior algebra. Namely, an endomorphism $f:\mn\lra\mn$ defines the derivation $f_*$ of $\Lambda^*\mn^*$ by the formula 
\begin{equation}\label{der} f_*\alpha:=\sum_{i=1}^nf(e_i)\wedge e_i\lrcorner\ \alpha,\qquad \mbox{ for all }\alpha\in \Lambda^*\mn^*,
\end{equation}
where $e_1, \ldots, e_n$ is any orthonormal basis of $(\mn,\bil)$. 

Assume that $\mn$ is 2-step nilpotent (and thus unimodular) and consider the orthogonal decomposition $\mn=\mv\oplus\mz$ as in Section~\ref{sec:prel}. One has an induced decomposition of the space of $k$-forms on $\mn$ as follows
\begin{equation}\label{dec}
\Lambda^k\mn^*=\bigoplus_{d\in\Z} \Lambda^d\mv^*\otimes \Lambda^{k-d}\mz^*,
\end{equation}
where, by convention, for every vector space $E$ and $d<0$ we set $\Lambda^dE^*:=0$. Of course the sum in \eqref{dec} is finite, having at most $k+1$ non-zero summands.
We denote by $\pi_{k,d}:\Lambda^k\mn^*\lra \Lambda^d\mv^*\otimes \Lambda^{k-d}\mz^*$ the corresponding projections. Given $\alpha\in \Lambda^k\mn^*$, we write accordingly $\alpha_d:=\pi_{k,d}(\alpha)$, so $\alpha=\sum_{d\in \Z}\alpha_d=\alpha_0+\ldots+\alpha_k$.

Let us introduce the further direct sum decomposition $\Lambda^k\mn^*=\Lambda^k_{\rm ev}\mn^*\oplus\Lambda^k\mn_{\rm odd}^*$, where
\begin{equation}\label{deceo}
\Lambda^k_{\rm ev}\mn^*:=\bigoplus_{d\ {\rm even}} \Lambda^d\mv^*\otimes \Lambda^{k-d}\mz^*,\quad\mbox{ and }\quad
\Lambda^k_{\rm odd}\mn^*:=\bigoplus_{d\ {\rm odd}} \Lambda^d\mv^*\otimes \Lambda^{k-d}\mz^*.
\end{equation}
We say that a $k$-form $\alpha$ is of {\em even $\mv$-degree} if $\alpha\in \Lambda^k_{\rm ev}\mn^*$ and of {\em odd $\mv$-degree} if $\alpha\in \Lambda^k_{\rm odd}\mn^*$. Correspondingly, we will denote by 
$$\mathcal{K}^k_{\rm ev}(\mn,g)\subseteq\mathcal{CK}^k_{\rm ev}(\mn,g)\subseteq \Lambda^k_{\rm ev}\mn^*\qquad \mbox{ and }\qquad\mathcal{K}^k_{\rm odd}(\mn,g)\subseteq\mathcal{CK}^k_{\rm odd}(\mn,g)\subseteq \Lambda^k_{\rm odd}\mn^*$$
 the spaces  Killing and conformal Killing $k$-forms on $(\mn,g)$ of even and odd $\mv$-degree. We will see later on (in Remark ~\ref{rem:uncp}) that the even and odd components of every conformal Killing form are again conformal Killing.

Since every linear form in $\mv^*$ annihilates $\mn'$,  the exterior differential $\dd$ vanishes on $\mv^*$ and maps $\mz^*$ to $\Lambda^2\mv^*$. We thus obtain that for $k\geq 0$,
\begin{equation}\label{d}\dd( \Lambda^d\mv^*\otimes \Lambda^{k-d}\mz^*)\subseteq  \Lambda^{d+2}\mv^*\otimes \Lambda^{k-d-1}\mz^*,\quad \mbox{ for all }d\in\Z,\end{equation}
and correspondingly
\begin{equation}\label{delta} \dd^*( \Lambda^d\mv^*\otimes \Lambda^{k-d}\mz^*)\subseteq  \Lambda^{d-2}\mv^*\otimes \Lambda^{k-d+1}\mz^*.\quad \mbox{ for all }d\in\Z,\end{equation}

Moreover, since $\nabla_y$ is a derivation of the exterior algebra of $\mn$ for every $y\in\mn$, using \eqref{nablax}--\eqref{nablaz} we obtain for every $d\in\Z$,
\begin{equation}\label{nablax1}\nabla_x(\Lambda^d\mv^*\otimes \Lambda^{k-d}\mz^*)\subseteq  (\Lambda^{d-1}\mv^*\otimes \Lambda^{k-d+1}\mz^*)\oplus  (\Lambda^{d+1}\mv^*\otimes \Lambda^{k-d-1}\mz^*),\quad \mbox{ for all } x\in\mv,
\end{equation}
and
\begin{equation}\label{nablaz1}\nabla_z(\Lambda^d\mv^*\otimes \Lambda^{k-d}\mz^*)\subseteq  \Lambda^{d}\mv^*\otimes \Lambda^{k-d}\mz^*,\quad\hbox{and}\quad \nabla_z(\Lambda^k\mz^*)=0,\quad \mbox{ for all } z\in\mz.
\end{equation}

\begin{pro}\label{pro:ck} Let $\mn$ be a $2$-step nilpotent Lie algebra with orthogonal decomposition $\mn=\mv\oplus\mz$. Then a $k$-form $\alpha$ on $\mn$ is a conformal Killing form  if and only if for every $x\in \mv$ and $z\in \mz$, its components with respect to the decomposition \eqref{dec} verify
\begin{eqnarray}
\pi_{k,d+1}(\nabla_x(\alpha_d+\alpha_{d+2}))&=&\frac{1}{k+1}x\lrcorner \dd \alpha_d-\frac1{n-k+1} x\wedge \dd^*\alpha_{d+2},\label{ckv}\\
\nabla_z\alpha_d&=&\frac{1}{k+1}z\lrcorner \dd\alpha_{d-2}-\frac1{n-k+1} z\wedge \dd^*\alpha_{d+2},\label{ckz}
\end{eqnarray}
for each $d\in\Z$. In particular,  if $\alpha\in \Lambda^k\mn^*$ is conformal Killing, then either $\dd^*\alpha_2=0$ or $\dim\mz =k-1$.
\end{pro}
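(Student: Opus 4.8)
The plan is to derive the two equations by decomposing the conformal Killing condition \eqref{ckinv} according to the bidegree splitting \eqref{dec}, and then to extract the final assertion from equation \eqref{ckz} by a degree-counting argument. The key observation is that \eqref{ckinv} is an identity in $\Lambda^k\mn^*$ for each $y\in\mn$, and since $\mn=\mv\oplus\mz$, it suffices to test it separately on $y=x\in\mv$ and $y=z\in\mz$, then project onto each summand $\Lambda^d\mv^*\otimes\Lambda^{k-d}\mz^*$.

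First I would treat the case $y=x\in\mv$. The right-hand side of \eqref{ckinv} is $\frac1{k+1}x\lrcorner\dd\alpha-\frac1{n-k+1}x\wedge\dd^*\alpha$. Using \eqref{d}, the term $\dd\alpha_d$ lands in $\Lambda^{d+2}\mv^*\otimes\Lambda^{k-d-1}\mz^*$, so contracting with $x\in\mv$ gives a contribution in bidegree $(d+1,k-d-1)$; by \eqref{delta}, $\dd^*\alpha_{d+2}$ lands in $\Lambda^d\mv^*\otimes\Lambda^{k-d-1}\mz^*$, and wedging with $x$ again lands in bidegree $(d+1,k-d-1)$. On the left-hand side, \eqref{nablax1} shows that $\nabla_x$ raises or lowers the $\mv$-degree by one; the component of $\nabla_x\alpha$ in bidegree $(d+1,k-d-1)$ therefore receives contributions exactly from $\alpha_d$ (degree raised) and $\alpha_{d+2}$ (degree lowered). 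Projecting onto bidegree $(d+1,k-d-1)$ via $\pi_{k,d+1}$ then yields precisely \eqref{ckv}. The case $y=z\in\mz$ is analogous but simpler: by \eqref{nablaz1} the operator $\nabla_z$ preserves bidegree, so $\nabla_z\alpha_d$ sits in bidegree $(d,k-d)$; on the right, $z\lrcorner\dd\alpha_{d-2}$ and $z\wedge\dd^*\alpha_{d+2}$ both land in bidegree $(d,k-d)$ by the same reasoning as above (now the contraction/wedge is with a central element), and matching the $(d,k-d)$-components gives \eqref{ckz}.

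For the final assertion, I would apply \eqref{ckz} with $d=0$. The term $\alpha_{-2}$ vanishes by convention, so the equation reads $\nabla_z\alpha_0=-\frac1{n-k+1}z\wedge\dd^*\alpha_2$. But $\alpha_0\in\Lambda^k\mz^*$, and \eqref{nablaz1} states exactly that $\nabla_z(\Lambda^k\mz^*)=0$. Hence $z\wedge\dd^*\alpha_2=0$ for every $z\in\mz$. Now $\dd^*\alpha_2$ lies in $\Lambda^0\mv^*\otimes\Lambda^{k-1}\mz^*=\Lambda^{k-1}\mz^*$ by \eqref{delta}; the condition that $z\wedge\beta=0$ for all $z\in\mz$, where $\beta=\dd^*\alpha_2\in\Lambda^{k-1}\mz^*$, forces either $\beta=0$ or $\dim\mz=k-1$ (since a nonzero $(k-1)$-form on $\mz$ that is annihilated by wedging with every vector of $\mz$ can only exist when $k-1$ equals $\dim\mz$, i.e.\ when $\beta$ is already of top degree).

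**The main obstacle** I anticipate is the careful bookkeeping of bidegrees on the two sides of \eqref{ckinv}, ensuring that after projection no cross-terms are omitted and that the interior-product and wedge operations land in the claimed summands; this is routine once \eqref{d}, \eqref{delta}, \eqref{nablax1}, and \eqref{nablaz1} are invoked, but it requires attention to the boundary conventions $\Lambda^dE^*=0$ for $d<0$. The final wedge-annihilation argument is elementary linear algebra, with the only subtlety being the observation that a top-degree form on $\mz$ is killed by every $z\wedge(\cdot)$, which is what produces the exceptional dimension constraint $\dim\mz=k-1$.
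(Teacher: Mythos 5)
Your proposal is correct and follows essentially the same route as the paper's proof: decompose \eqref{ckinv} by bidegree using \eqref{d}, \eqref{delta}, \eqref{nablax1}, \eqref{nablaz1}, test on $y=x\in\mv$ and $y=z\in\mz$ separately (linearity in $y$ plus directness of the sum giving the equivalence in both directions), and obtain the final assertion from \eqref{ckz} at $d=0$ combined with $\nabla_z\alpha_0=0$. Your explicit justification that a nonzero form in $\Lambda^{k-1}\mz^*$ annihilated by wedging with every element of $\mz$ must be of top degree is a correct elaboration of a step the paper leaves implicit.
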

\begin{proof}
Let $\alpha$ be a $k$-form on $\mn$ and consider its components $\alpha_d$ with respect to \eqref{dec}, for $d\in\Z$.  Suppose that $\alpha$ is a conformal Killing form. By \eqref{ckinv} we obtain
\begin{equation}\label{ckinvd}
\sum_{d\in\Z}\nabla_y\alpha_d=\frac{1}{k+1}\sum_{d\in\Z} y\lrcorner \dd\alpha_d-\frac1{n-k+1} \sum_{d\in\Z} y\wedge
\dd^*\alpha_d,\quad \mbox{ for all }y\in \mn.
\end{equation}

For every $x\in\mv$, take $y=x$ in \eqref{ckinvd} and, for each ${d\in\Z}$,  project this equation onto $\Lambda^{d+1}\mv^*\otimes \Lambda^{k-d-1}\mz^*$. By using \eqref{d}--\eqref{nablaz1}, this procedure gives 
\[
\pi_{k,d+1}(\nabla_x(\alpha_d+\alpha_{d+2}))=\frac{1}{k+1}x\lrcorner \dd \alpha_d-\frac1{n-k+1} x\wedge \dd^*\alpha_{d+2},
\] yielding to \eqref{ckv}. Similarly, for every $z\in\mz$, taking $y=z$ in \eqref{ckinvd} and projecting onto 
$\Lambda^{d}\mv^*\otimes \Lambda^{k-d}\mz^*$ we get \eqref{ckz}.

Conversely, by \eqref{d}--\eqref{nablaz1}, one can show that if \eqref{ckv} and \eqref{ckz} hold, then $\alpha$ satisfies \eqref{ckinvd} and thus it is a conformal Killing form.

For the last part of the proof, assume that $\alpha\in \Lambda^k\mn^*$ is conformal Killing. Since $\alpha_0\in \Lambda^k\mz^*$, we have $\nabla_z\alpha_0=0$ by \eqref{nablaz}. This fact together with \eqref{ckz} implies $z\wedge \dd^*\alpha_{2}=0$ for all $z\in \mz$; the latter is satisfied only when $\dd^*\alpha_2=0$ or $\dim\mz= k-1$. 
\end{proof}
\begin{remark}\label{rem:uncp}
From Proposition~\ref{pro:ck}, we can see that the system of equations that an exterior form has to satisfy in order to be a conformal Killing form is uncoupled between even $\mv$-degree and odd $\mv$-degree. This implies
\begin{equation}\label{ckdec}\mathcal{CK}^k(\mn,g)=\mathcal{CK}^k_{\rm ev}(\mn,g)\oplus \mathcal{CK}^k_{\rm odd}(\mn,g), \quad \mbox{ for all }k=1, \ldots, n.
\end{equation}
The similar decomposition for the space of Killing forms,
\begin{equation}\label{kdec}\mathcal{K}^k(\mn,g)=\mathcal{K}^k_{\rm ev}(\mn,g)\oplus \mathcal{K}^k_{\rm odd}(\mn,g), \quad \mbox{ for all }k=1, \ldots, n.
\end{equation}
was obtained in \cite[Remark 4.3]{va2} where, however, the odd and even degree components of a form are defined with respect to the degree in $\mz$ instead of $\mv$. This alternative definition interchanges the two summands in \eqref{kdec} when the total degree $k$ is odd, but does not affect the decomposition itself.
\end{remark}

\section{Conformal Killing $2$-forms on $2$-step nilpotent Riemannian Lie groups}
The main result of this section is to describe explicitly those $2$-step nilpotent metric Lie algebras admitting strict conformal Killing $2$-forms. We will see that these examples will arise within the family of Heisenberg Lie algebras.

Recall that the Heisenberg Lie algebra $\mh_{2q+1}$ of dimension $2q+1$ admits a basis\linebreak $e_1, \ldots, e_{2q},z$ such that the only non-trivial Lie brackets are
\begin{equation}\label{eq:heisb}
[e_{2i-1},e_{2i}]=z, \qquad i=1, \ldots, q.
\end{equation}
In particular, the center $\mz$ of $\mh_{2q+1}$ is 1-dimensional and spanned by $z$. Conversely, it is easy to prove that every $2$-step nilpotent Lie algebra with 1-dimensional center has odd dimension $2q+1$ and is isomorphic to the Heisenberg Lie algebra $\mh_{2q+1}$.

Given an inner product on $\mh_{2q+1}$, the orthogonal decomposition $\mn=\mv\oplus \mz$ verifies $\dim\mv=2q$ and, for every $z\in \mz$, $j(z)\in \so(\mv)$ is non-singular by \eqref{int}. 

\begin{teo}\label{teo:2forms} On any $2$-step nilpotent metric Lie algebra $(\mn,g)$, $\mathcal{CK}^2(\mn,g)=\mathcal{K}^2(\mn,g)$, except when $\mn$ is a Heisenberg Lie algebra where, for every metric $g$, $\mathcal{CK}^2(\mn,g)$ is $1$-dimensional and $\mathcal{K}^2(\mn,g)=0$.
\end{teo}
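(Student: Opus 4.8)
The plan is to split the argument according to the dimension of the center, using the last assertion of Proposition~\ref{pro:ck} to dispose of the generic case and then to treat the Heisenberg algebras by hand. First I would observe that for a $2$-form $\alpha=\alpha_0+\alpha_1+\alpha_2$ the codifferential is concentrated in its top $\mv$-component: by \eqref{delta} one has $\dd^*\alpha_0=\dd^*\alpha_1=0$ for degree reasons, so $\dd^*\alpha=\dd^*\alpha_2\in\mz^*$, and $\alpha$ is Killing if and only if $\dd^*\alpha_2=0$. If $\alpha$ is conformal Killing, the last part of Proposition~\ref{pro:ck} (with $k=2$) gives $\dd^*\alpha_2=0$ or $\dim\mz=1$. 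Therefore, as soon as $\dim\mz\ge 2$, every conformal Killing $2$-form is coclosed, i.e. $\mathcal{CK}^2(\mn,g)=\mathcal{K}^2(\mn,g)$. Since a $2$-step nilpotent Lie algebra has $1$-dimensional center exactly when it is a Heisenberg algebra $\mh_{2q+1}$, it remains to analyse this case.

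So I would assume $\mn=\mh_{2q+1}$, write $\mz=\R z$ with $|z|=1$, and set $J:=j(z)\in\sso(\mv)$, which is invertible by \eqref{int}. Here $\alpha_0\in\Lambda^2\mz^*=0$, so a conformal Killing $2$-form reads $\alpha=\alpha_1+\alpha_2$, and by Remark~\ref{rem:uncp} I may treat the odd part $\alpha_1$ and the even part $\alpha_2$ separately. For the odd part, specialising \eqref{ckz} to $d=1$ (all neighbouring components vanish) gives $\nabla_z\alpha_1=0$; writing $\alpha_1=\beta\wedge z^\flat$ with $\beta\in\mv^*$ and using \eqref{eq:nabla} and \eqref{nablaz1}, this reduces to $\nabla_z\beta=0$, which, writing $\beta=b^\flat$, reads $(Jb)^\flat=0$. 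Invertibility of $J$ forces $b=0$, so $\alpha_1=0$: a conformal Killing $2$-form on $\mh_{2q+1}$ has no odd $\mv$-component.

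For the even part I would identify $\alpha_2\in\Lambda^2\mv^*$ with the skew endomorphism $A\in\sso(\mv)$ given by $\alpha_2(x,y)=g(Ax,y)$. The two surviving equations of Proposition~\ref{pro:ck} are \eqref{ckv} with $d=0$ and \eqref{ckz} with $d=2$; the remaining projections are automatically satisfied, and in particular \eqref{ckz} with $d=0$ reduces to $z^\flat\wedge\dd^*\alpha_2=0$, which holds trivially because $\dd^*\alpha_2\in\R\,z^\flat$ — this is exactly the feature distinguishing the case $\dim\mz=k-1$. Using \eqref{eq:nabla} I would check that \eqref{ckz} with $d=2$ is equivalent to $[A,J]=0$, while \eqref{ckv} with $d=0$, after evaluating both sides on a pair $(v,z)$ and inserting $\dd^*\alpha_2=\tfrac12\tr(AJ)\,z^\flat$ (obtained by adjunction from $\dd z^\flat=-g(j(z)\cdot,\cdot)$), becomes the endomorphism identity $JA=\tfrac{\tr(AJ)}{\dim\mv}\,\Id$. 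Since $J$ is invertible this is equivalent to $A=c\,J^{-1}$ for a scalar $c$; such an $A$ automatically commutes with $J$, and $\tr(AJ)=c\,\dim\mv$ makes the constant self-consistent for every value of $c$. Hence $\mathcal{CK}^2(\mh_{2q+1},g)$ is the line spanned by the $2$-form $x,y\mapsto g(J^{-1}x,y)$, so it is $1$-dimensional; and such a form is Killing iff $\dd^*\alpha_2=0$ iff $\tr(AJ)=c\,\dim\mv=0$ iff $c=0$, giving $\mathcal{K}^2(\mh_{2q+1},g)=0$.

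The reduction to the Heisenberg case is immediate from Proposition~\ref{pro:ck}, so I expect the only real work — and the main obstacle — to be the even-part computation on $\mh_{2q+1}$: translating \eqref{ckv}--\eqref{ckz} into the endomorphism identities $[A,J]=0$ and $JA\propto\Id$ requires careful bookkeeping of the covariant derivative formulas \eqref{eq:nabla} and of the constant $\tr(AJ)$ coming from $\dd^*\alpha_2$, and one must verify that the normalisation is consistent for every $c$, which is precisely what produces a genuinely $1$-parameter — rather than empty — family of strict conformal Killing forms.
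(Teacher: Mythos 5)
Your proposal is correct and takes essentially the same approach as the paper: reduction to the Heisenberg case via the final assertion of Proposition~\ref{pro:ck}, elimination of the odd component from $\nabla_z\alpha_1=0$ together with the invertibility of $j(z)$, and identification of the even component with a multiple of $g(j(z)^{-1}\cdot,\cdot)$, followed by the converse check. The only difference is computational bookkeeping: you insert the explicit formula $\dd^*\alpha_2=\tfrac12\tr(AJ)\,z^\flat$ and derive the endomorphism identity directly, whereas the paper reaches the same proportionality by differentiating the identity $z\lrcorner\alpha=0$.
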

\begin{proof}
Let $(\mn,g)$ be a metric 2-step nilpotent Lie algebra and let $\alpha\in\mathcal{CK}^2(\mn,g)$. Write $\alpha=\alpha_0+\alpha_1+\alpha_2$ with respect to the decomposition \eqref{dec}. From \eqref{d} and \eqref{delta} we obtain 
\begin{equation}\label{eq:d10}
\dd\alpha=\dd\alpha_1+\dd\alpha_0\in \Lambda^3\mv^*\oplus (\Lambda^2\mv^*\otimes\mz^*)\;\mbox{ and }\;\dd^*\alpha=\dd^*\alpha_2\in \mz^*.
\end{equation}
Moreover, by the second part of Proposition~\ref{pro:ck},  either $ \dd^*\alpha_2=0$ or $\mz$ is $1$-dimensional; the former case implies that $\alpha$ is Killing by \eqref{eq:d10}. Therefore, if $\alpha$ is strict, then $\dim \mz=1$ and thus $\alpha_0=0$.

Assume that $\dim(\mz)=1$, so that $\mn$ is isomorphic to a Heisenberg Lie algebra $\mh_{2q+1}$. We shall prove that $\mathcal{CK}^2(\mh_{2q+1},g)$ is 1-dimensional, for every metric $g$.

Fix a unit vector $z\in \mz$ and write the component $\alpha_1$ of $\alpha$ as $\alpha_1=\beta\wedge z$ for some $\beta\in \mv^*$. Then \eqref{ckz} for $d=1$ yields 
$$\nabla_{z}\beta\wedge z=0,$$
which implies $\nabla_{z}\beta=0$. However, $\nabla_{z}\beta=-\frac12j({z})\beta$, and $j({z})$ is non-singular by \eqref{int}, so finally $\beta=0$.

Consequently $\alpha=\alpha_2\in \Lambda^2\mv^*$ and thus $\dd\alpha=0$. Denoting $\dd^*\alpha=c{z}$ for some $c\in\mathbb{R}$, \eqref{ckz} for $d=2$ reduces to $\nabla_{z} \alpha=0$, which by \eqref{eq:nabla} is equivalent to $j({z})_*\alpha=0$. In other words, $j({z})$ commutes with the skew-symmetric endomorphism of $\mv$ associated to $\alpha$. Moreover, notice that for every $x\in \mv$, $\nabla_x\alpha\in \mv\otimes \mz$ and
\eqref{ckv} for $d=0$ is equivalent to 
$$z\lrcorner\ \nabla_x \alpha=\frac{c}{n-1} x, \quad \mbox{ for all }x\in\mv.$$
Since ${z}\lrcorner\ \alpha=0$, this last equation 
together with \eqref{eq:nabla} imply
\[0=\nabla_x(z\lrcorner\ \alpha)=\nabla_xz\lrcorner\ \alpha +z\lrcorner\ \nabla_x\alpha=-\frac12 (j(z)x)\lrcorner\ \alpha+\frac{c}{n-1} x, \quad \mbox{ for all }x\in \mv.
\]
This implies that the endomorphism of $\mv$ associated to $\alpha$ has to be proportional to $j({z})^{-1}$; equivalently, $\alpha$ is a multiple of the 2-form $g(j({z})^{-1}\cdot,\cdot)$. 

Conversely, one can show that $\alpha:=g(j({z})^{-1}\cdot,\cdot)$ is a conformal Killing form by  reversing the above computations and using Proposition~\ref{pro:ck}.  Moreover, $\alpha$ is a strict conformal Killing form, since $\dd^*\alpha=\frac{n-1}2z$. 
Due to \eqref{d} and the fact that $\alpha\in \Lambda^2\mv^*$, we get $\dd\alpha=0$  and thus its Hodge dual $\ast \alpha$ is a Killing $2q-1$-form, for any chosen orientation of $(\mn,g)$. This concludes the proof.
\end{proof}

\begin{remark} From the last part of the proof of Theorem~\ref{teo:2forms}, one can deduce that when 
the space of conformal Killing 2-forms does not coincide with the space of Killing 2-forms, then it has the precise description: $\mathcal {CK}^2(\mh_{2q+1},g)=\ast  \mathcal{K}^{2q-1}(\mh_{2q+1},g)$.

In general, such a description of conformal Killing 2-forms does not hold on arbitrary metric Lie algebras. Indeed, in \cite[Section 6.1]{HO} the authors exhibit metric Lie algebras carrying conformal Killing 2-forms which cannot be written as linear combinations of Killing forms and Hodge duals of Killing forms.
\end{remark}

\section{Conformal Killing $3$-forms on $2$-step nilpotent Riemannian Lie groups} 
This last section aims to show that strict conformal Killing $3$-forms only appear on 1-dimensional trivial extensions of  Heisenberg Lie algebras $\R\oplus\mh_{2q+1}$ (for any metric), and on the free $2$-step nilpotent Lie algebra $\mn_{3,2}$ of dimension 6, endowed with a particular 1-parameter family of metrics.

Let $(\mn,g)$ be a 2-step nilpotent metric Lie algebra. As a consequence of the decomposition in \eqref{ckdec}, in order to describe the space $\mathcal{CK}^3(\mn,g)$ of conformal Killing 3-forms, it is enough to study the spaces of conformal Killing $3$-forms of even or odd $\mv$-degree.

\subsection{The case of even $\mv$-degree} We start by focusing our attention on $\mathcal{CK}_{\rm ev}^3(\mn,g)$. 

\begin{pro} \label{pro:3formsevenv}
For any $2$-step nilpotent metric Lie algebra $(\mn,g)$, $\mathcal{CK}_{\rm ev}^3(\mn,g)=\mathcal{K}_{\rm ev}^3(\mn,g)$, except when $\mn$ is a $1$-dimensional trivial extension of a Heisenberg Lie algebra where, for every metric $g$, $\mathcal{CK}_{\rm ev}^3(\R\oplus\mh_{2q+1},g)$ is $2$-dimensional and $\mathcal{K}_{\rm ev}^3(\R\oplus\mh_{2q+1},g)$ is $1$-dimensional.
\end{pro}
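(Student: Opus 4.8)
The plan is to analyze the system \eqref{ckv}--\eqref{ckz} for a $3$-form $\alpha$ of even $\mv$-degree, whose only nonzero components under \eqref{dec} are $\alpha_0\in\Lambda^3\mz^*$ and $\alpha_2\in\Lambda^2\mv^*\otimes\mz^*$. First I would record the exterior derivative and codifferential: by \eqref{d} and \eqref{delta} we have $\dd\alpha=\dd\alpha_0+\dd\alpha_2$, with $\dd\alpha_0\in\Lambda^2\mv^*\otimes\Lambda^2\mz^*$ and $\dd\alpha_2\in\Lambda^4\mv^*$, while $\dd^*\alpha=\dd^*\alpha_2\in\Lambda^2\mz^*$. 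Here $k=3$, so the last sentence of Proposition~\ref{pro:ck} forces either $\dd^*\alpha_2=0$ (in which case $\alpha$ is Killing and we are done) or $\dim\mz=2$. Thus the whole strict case is immediately confined to $2$-dimensional centers, and the task reduces to classifying those $(\mn,g)$ with $\dim\mz=2$ carrying a strict $\alpha$.

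Assuming $\dim\mz=2$, I would fix an orthonormal basis $z_1,z_2$ of $\mz$ and write $\dd^*\alpha_2=c\,z_1\wedge z_2$ with $c\neq0$. The strategy is then to extract as much rigidity as possible from the equations \eqref{ckv} and \eqref{ckz} taken degree by degree. Equation \eqref{ckz} for the top component, combined with $\nabla_z\alpha_0=0$ (from \eqref{nablaz1}), should pin down how $\dd^*\alpha_2$ interacts with the $j(z)$ endomorphisms, much as in the proof of Theorem~\ref{teo:2forms}; the component $\alpha_2$ can be encoded as a $\mz$-valued $2$-form on $\mv$, i.e. a pair of skew endomorphisms $A_1,A_2$ of $\mv$ via $g(A_sx,y)=\alpha_2(x,y,z_s)$. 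I expect the relations \eqref{ckz} to translate into commutation conditions between the $A_s$ and the $j(z_s)$, together with an $\alpha_0$-coupling term, while \eqref{ckv} for $d=0$ and $d=2$ should yield algebraic identities forcing the $A_s$ to be expressible through $j(z_1)^{-1},j(z_2)^{-1}$ and forcing strong constraints (invertibility, proportionality) on the $j(z_s)$ themselves. The goal of this bookkeeping is to show that a strict solution exists only when the $j$-map is extremely rigid.

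The main obstacle, and the crux of the argument, will be converting these endomorphism identities into the statement that $\mn$ must be $\R\oplus\mh_{2q+1}$. I anticipate that the constraints force one central direction, say $z_2$, to act trivially (giving the $\R$ summand: $\ker j(z_2)$ must be large, indeed all of $\mv$), while the other direction $z_1$ acts nonsingularly on the complementary $\mv$, reproducing a Heisenberg factor. Concretely, one expects to deduce that $j(z_2)=0$ and $j(z_1)$ invertible on $\mv$, so that $\mn\cong\R z_2\oplus\mh_{2q+1}$, the orthogonality $z_1\perp z_2$ being compatible with a product metric only up to the admissible freedom. Establishing that the derived constraints leave \emph{no other} possibility — ruling out, in particular, both $j(z_1),j(z_2)$ being simultaneously nonsingular — is where the real work lies; it amounts to a careful rank/commutator analysis of the pencil $\{sj(z_1)+tj(z_2)\}$ forced by \eqref{ckv}.

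Finally, on $\R\oplus\mh_{2q+1}$ I would verify the dimension count by exhibiting the solution space explicitly. Writing $z_1$ for the Heisenberg center and $z_2$ for the $\R$-factor, one candidate strict form is built from $g(j(z_1)^{-1}\cdot,\cdot)\wedge z_2$ (mirroring the degree-$2$ construction of Theorem~\ref{teo:2forms}, now wedged with the flat direction), and a second, genuinely Killing, generator comes from the $\alpha_0=z_1\wedge z_2\wedge(\cdot)$ type term or from the Killing theory of \cite{va}. Checking directly that these two forms satisfy \eqref{ckv}--\eqref{ckz}, that they are linearly independent, and that exactly one combination is coclosed, will give $\dim\mathcal{CK}^3_{\rm ev}=2$ and $\dim\mathcal{K}^3_{\rm ev}=1$, completing the proof.
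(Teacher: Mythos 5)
Your reduction is exactly the paper's: by Proposition~\ref{pro:ck} with $k=3$, either $\dd^*\alpha_2=0$ (so $\alpha$ is Killing) or $\dim\mz=2$, and in the latter case $\alpha_0\in\Lambda^3\mz^*=0$, so everything hinges on classifying solutions with $\dim\mz=2$. But from that point on your text is a plan, not a proof: the step you yourself call ``where the real work lies'' is never carried out. What actually closes the argument in the paper is the derivation, from \eqref{ck02s}, of three concrete identities for $\alpha=B_1\wedge z_1+B_2\wedge z_2$ (your $A_s$ are these $B_s$; the paper reserves $A_i$ for $j(z_i)$): (i) $B_1A_2-B_2A_1=\lambda\,{\rm Id}$ from the $\pi_{3,1}$-equation, (ii) $B_1x\wedge A_1x+B_2x\wedge A_2x=0$ for all $x\in\mv$ from the $\pi_{3,3}$-equation, and (iii) $B_1,B_2$ commute with $A_1,A_2$, from $\nabla_z\alpha=0$; then a dichotomy on the pair $A_1,A_2$. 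If $A_1,A_2$ are linearly independent, (iii) together with \cite[Proposition 5.4]{va} --- an external input you never invoke and would otherwise have to reprove --- gives $B_i=a_{i1}A_1+a_{i2}A_2$, and (ii) forces $a_{12}=a_{21}$, whence $\dd^*\alpha=0$ by \eqref{wh}: every conformal Killing form is Killing, with \emph{no} restriction on $\mn$ whatsoever. Only when $A_1,A_2$ are proportional (rotate so that $A_2=0$, i.e. $\mn=\R\oplus\mh_{2q+1}$) do (i) and (ii) give $B_1\propto A_1$ and $B_2\propto A_1^{-1}$, producing the strict example. Your ``careful rank/commutator analysis of the pencil $\{sj(z_1)+tj(z_2)\}$'' names this missing step without supplying it, and your framing (strictness should force $j(z_2)=0$ and $j(z_1)$ invertible) skips the half of the dichotomy in which nothing is forced on $\mn$ and one must instead show that strictness is impossible.

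There is also an error in your description of the solution space. In the relevant case $\dim\mz=2$ one has $\Lambda^3\mz^*=0$, and a form of type $z_1\wedge z_2\wedge(\cdot)$ with the remaining slot in $\mv^*$ lies in $\Lambda^1\mv^*\otimes\Lambda^2\mz^*$, which has \emph{odd} $\mv$-degree; so your proposed second (Killing) generator cannot exist. The actual Killing generator is $A_1\wedge z_1=j(z_1)\wedge z_1$ (your strict generator $g(j(z_1)^{-1}\cdot,\cdot)\wedge z_2$ is the correct one). Finally, note that exhibiting two independent conformal Killing forms only bounds $\dim\mathcal{CK}^3_{\rm ev}(\R\oplus\mh_{2q+1},g)$ from below; the equality with $2$ requires the constraint analysis above, which shows that every solution lies in their span.
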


\begin{proof} Let $\alpha$ be a $3$-form on $\mn$ of even $\mv$-degree, and consider its decomposition $\alpha=\alpha_0+\alpha_2$ with respect to \eqref{dec}.
From \eqref{d} we obtain
\begin{equation}\label{da}
\dd\alpha=\dd\alpha_0+\dd\alpha_2\in (\Lambda^2\mv^*\otimes\Lambda^2\mz^*)\oplus\Lambda^4\mv^*,\end{equation}
and from \eqref{delta} we get 
\begin{equation}\label{dda}\dd^*\alpha=\dd^*\alpha_2\in\Lambda^2\mz^*.
\end{equation}
 
If $\alpha$ is a conformal Killing form on $(\mn,g)$, then by Proposition~\ref{pro:ck}, for all $x\in \mv$ and $z\in \mz$ the following system is satisfied
\begin{equation}\label{ck02}\begin{cases}
\pi_{3,1}(\nabla_x (\alpha_0+\alpha_2))&=\frac1{4}\ x\lrcorner\ \dd\alpha_0-\frac{1}{n-2} x\wedge \dd^*\alpha_2, \\
\pi_{3,3}(\nabla_x \alpha_2)&=\frac1{4}\ x\lrcorner\ \dd\alpha_2,  \\
0&=-\frac{1}{n-2} z\wedge \dd^*\alpha_2,\\
\nabla_z \alpha_2&=\frac1{4}\ z\lrcorner\ \dd\alpha_0,
\end{cases}
\end{equation}
where we recall that $\pi_{k,d}:\Lambda^k\mn^*\lra \Lambda^d\mv^*\otimes \Lambda^{k-d}\mz^*$ denote the projections with respect to the direct sum decomposition \eqref{dec}.
The third equation of the above system shows that if $\dim(\mz)\ne 2$, then $\dd^*\alpha=\dd^*\alpha_2=0$ and thus $\alpha$ is a Killing form. Hence $\mathcal{CK}_{\rm ev}^3(\mn,g)=\mathcal{K}_{\rm ev}^3(\mn,g)$ if $\dim \mz\neq 2$.

Consider now the case $\dim(\mz)= 2$. In this case $\alpha_0\in\Lambda^3\mz^*$ vanishes and thus $\alpha=\alpha_2$, so for every $x\in\mv$ and $z\in \mz$, \eqref{ck02} becomes
\begin{equation}\label{ck02s}\begin{cases}
\pi_{3,1}(\nabla_x \alpha_2)&=-\frac{1}{n-2} x\wedge \dd^*\alpha_2,\\
\pi_{3,3}(\nabla_x \alpha_2)&=\frac1{4}\ x\lrcorner\ \dd\alpha_2,\\
\nabla_z \alpha_2&=0.
\end{cases}
\end{equation}

Let $z_1$, $z_2$ be an orthonormal basis of $\mz$. We denote $A_i:=j(z_i)=-\dd z_i\in \Lambda^2\mv^*\simeq \so(\mv)$, for $i=1,2$,  and write
$\alpha=B_1\wedge z_1+ B_2\wedge z_2$, with $B_1,B_2\in\Lambda^2\mv^*$. 

We have $\pi_{3,1}(\nabla_x \alpha_2)=\nabla_xB_1\wedge z_1+\nabla_x B_2\wedge z_2,$ and 
$$g(\dd^*\alpha_2,z_1\wedge z_2)=g(\alpha_2,-A_1\wedge z_2+z_1\wedge A_2)=-g(B_2,A_1)+g(B_1,A_2),$$
where the scalar products on the right hand side are the 2-form scalar products. 
We thus obtain
\begin{equation}\label{wh}
\dd^*\alpha_2=(-g(B_2,A_1)+g(B_1,A_2))z_1\wedge z_2.
\end{equation}
The first equation in \eqref{ck02s} thus becomes
$$z_2\lrcorner\ \nabla_xB_1-z_1\lrcorner\ \nabla_xB_2=-\frac{1}{n-2}(-g(B_2,A_1)+g(B_1,A_2))x, \qquad\mbox{ for all } x\in \mv.$$
Using \eqref{eq:nabla} we compute $z_2\lrcorner\ \nabla_xB_1=\nabla_x(z_2\lrcorner\ B_1)-(\nabla_xz_2)\lrcorner\ B_1=\frac12B_1(A_2x)$, and similarly 
$z_1\lrcorner\ \nabla_xB_2=\frac12B_2(A_1x)$, whence
$$\frac12B_1(A_2x)-\frac12B_2(A_1x)=-\frac{1}{n-2}(-g(B_2,A_1)+g(B_1,A_2))x, \qquad\mbox{ for all } x\in \mv.$$
Since the 2-form scalar products are half of the endomorphism scalar products, the above equation is equivalent to 
\begin{equation}\label{BABA}
B_1A_2-B_2 A_1=\lambda {\rm Id},
\end{equation}for some $\lambda\in\R$.

Using \eqref{eq:nabla} and the fact that $\dd z_i=-A_i$ for $i=1,2$, the second equation in \eqref{ck02s} becomes
$$-\frac12(B_1\wedge A_1x+B_2\wedge A_2x)=-\frac14 x\lrcorner\ (B_1\wedge A_1+B_2\wedge A_2),  \qquad\mbox{ for all } x\in \mv,$$
or equivalently 
\begin{equation*}\label{ab}B_1\wedge A_1x+B_2\wedge A_2x=B_1x\wedge A_1+B_2x\wedge A_2,  \qquad\mbox{ for all } x\in \mv.\end{equation*}
Contracting this equation with $x$ yields
\begin{equation}\label{BxAx}
B_1x\wedge A_1x+B_2x\wedge A_2x=0,  \qquad\mbox{ for all } x\in \mv.
\end{equation}

Finally, the last equation in \eqref{ck02s} is equivalent to the fact that $B_1$ and $B_2$ commute with $A_1$ and $A_2$.

Assume now that $A_1$ and $A_2$ are linearly independent. Then Proposition 5.4 from \cite{va} shows that $B_1$ and $B_2$ are linear combinations of $A_1$ and $A_2$:
$B_1=a_{11}A_1+a_{12}A_2$, $B_2=a_{21}A_1+a_{22}A_2$, for some $a_{ij}\in \R$, $i,j=1,2$. Reinjecting in the last equation yields $(a_{12}-a_{21})A_1x\wedge A_2x=0$ for every $x\in\mv$, so $a_{12}-a_{21}=0$. This shows that $g(B_2,A_1)=g(B_1,A_2)$, whence $\dd\alpha=\dd^*\alpha_2=0$ by \eqref{wh}, so $\alpha$ is a Killing form. Hence $\mathcal{CK}_{\rm ev}^3(\mn,g)=\mathcal{K}_{\rm ev}^3(\mn,g)$ also in this case.

It remains to consider the case where $A_1$ and $A_2$ are proportional where, by performing a rotation in $\mz$ if necessary, one can assume that $A_2=0$. In this case $(\mn,g)$ is the orthogonal direct sum of the abelian 1-dimensional algebra $\langle z_2\rangle$ and the Heisenberg algebra $\mh_{2q+1}$ of dimension $2q+1:=n-1$. Moreover, from \eqref{BABA} and \eqref{BxAx} we see that $B_1$ is proportional to $A_1$ and $B_2$ is proportional to $A_1^{-1}$. That is, $\alpha$ belongs to the vector space spanned by $A_1 \wedge z_1$ and $A_1^{-1}\wedge z_2$. 

It is easy to check from the above that these two $3$-forms are indeed conformal Killing on $(\mn,g)$. Moreover, $\dd^*( A_1 \wedge z_1 )=0$, $\dd (A_1^{-1}\wedge z_2)=0$ and $\dd^*( A_1^{-1}\wedge z_2)=q\, z_1\wedge z_2$ by  \eqref{d} and \eqref{wh}. 

Therefore $\mathcal{CK}^3_{\rm ev}(\R\oplus\mh_{2q+1},g)$ is spanned by the Killing $3$-form $A_1 \wedge z_1 $ and the strict conformal Killing $3$-form $A_1^{-1}\wedge z_2$ which, being closed, is the Hodge dual of a Killing $2q-1$-form. In particular, $\mathcal K^3(\R\oplus\mh_{2q+1},g)$ is 1-dimensional. This completes the proof.
\end{proof}

\subsection{The case of odd $\mv$-degree}
Now we continue by describing the space $\mathcal{CK}_{\rm odd}^3(\mn,g)$. It follows from \cite[Proposition 5.1]{va} that $\mathcal{K}_{\rm odd}^3(\mn,g)=0$ for any 2-step nilpotent metric Lie algebra $(\mn,g)$. Therefore, every non-zero conformal Killing $3$-form on $(\mn,g)$ of odd $\mv$-degree  is automatically strict.

Let $\alpha$ in $\Lambda_{\rm odd}^3\mn^*$ be a $3$-form of odd $\mv$-degree and consider its decomposition $\alpha=\alpha_1+\alpha_3$ with respect to \eqref{dec}.
From \eqref{d} we obtain
\begin{equation}\label{da1}
\dd\alpha=\dd\alpha_1\in \Lambda^3\mv^*\otimes \mz^*,\end{equation}
and from \eqref{delta} we get 
\begin{equation}\label{dda1}\dd^*\alpha=\dd^*\alpha_3\in\mv^*\otimes \mz^*.
\end{equation}
 
Assume from now on that $\alpha$ is a conformal Killing form. Then, by Proposition~\ref{pro:ck}, for every $x\in \mv$ and $z\in \mz$ the following system is verified
\begin{equation}\label{sxzodd}\begin{cases}\pi_{3,0}(\nabla_x \alpha_1)&=0,\\
\pi_{3,2}(\nabla_x (\alpha_1+\alpha_3))&=\frac1{4}\ x\lrcorner\ \dd\alpha_1-\frac{1}{n-2} x\wedge \dd^*\alpha_3,\\
\nabla_z \alpha_1&=-\frac{1}{n-2} z\wedge \dd^*\alpha_3,\\
\nabla_z \alpha_3&=\frac1{4}\ z\lrcorner\ \dd\alpha_1.\\
\end{cases}
\end{equation}

Let $p$ and $m$ denote the dimensions of $\mv$ and $\mz$ respectively. We fix orthonormal bases $x_1,\ldots,x_p$ of $\mv$ and $z_1,\ldots,z_m$ of $\mz$ and write 
\begin{equation}\label{eq:alpha1}
\alpha_1=\sum_{s,r=1}^{m}\xi_{sr}\wedge z_s\wedge z_r,
\end{equation} 
with $\xi_{sr}=-\xi_{rs}\in\mv$. Using the notation $A_s:=j(z_s)=-\dd z_s\in\Lambda^2\mv^*\simeq\mathfrak{so}(\mv)$, we compute
\begin{equation}\label{da2}\dd\alpha_1=\sum_{s,r=1}^{m}\dd(\xi_{sr}\wedge z_s\wedge z_r)=-2\sum_{s,r=1}^{m}\xi_{sr}\wedge \dd z_s\wedge  z_r=2\sum_{s,r=1}^{m}A_s\wedge\xi_{sr}\wedge z_r,\end{equation}
and for every $s=1,\ldots,m$ and $x\in\mv$,
$$g(\dd^*\alpha_3,x\wedge z_s)=g(\alpha_3,\dd(x\wedge z_s))=g(\alpha_3,x\wedge j(z_s))=g(A_s\lrcorner\ \alpha_3,x),$$
where $A_s\lrcorner\ \alpha_3:=\frac12\sum_{a=1}^p A_s(x_a)\lrcorner\ x_a\lrcorner\ \alpha_3$ is the contraction of $\alpha_3$ with the 2-form $A_s$. The previous formula thus implies
\begin{equation}\label{da3}\dd^*\alpha_3=\sum_{s=1}^m (A_s\lrcorner\ \alpha_3)\wedge z_s.\end{equation}

Since $\nabla_{z_k}=-\frac12j(z_k)_*=-\frac12(A_k)_*$ for every  $k=1,\ldots,m$, we can then write the third and fourth equations in \eqref{sxzodd} as
\begin{equation}\label{s3}\sum_{s,r=1}^{m}(A_k\xi_{sr})\wedge z_s\wedge z_r=\frac{2}{n-2}z_k\wedge\sum_{s=1}^m (A_s\lrcorner\ \alpha_3)\wedge z_s,\qquad \mbox{ for all } k=1,\ldots,m,
\end{equation}
and
\begin{equation}\label{s4}(A_k)_*\alpha_3=\sum_{s=1}^{m}A_s\wedge\xi_{sk},\qquad \mbox{ for all } k=1,\ldots,m.
\end{equation}

It is easy to check that \eqref{s3} is equivalent to 

\begin{equation}\label{s31}A_k\xi_{ij}=0, \mbox{ for all } k\ne i,j,\quad \mbox{ and }\quad A_i\xi_{ij}=-\frac{1}{n-2}A_j\lrcorner\ \alpha_3, \mbox{ for all } i\ne j.\qquad
\end{equation}

After a straightforward computation, it turns out that the first equation in \eqref{sxzodd} is equivalent to 
$$A_k\xi_{ij}+A_i\xi_{jk}+A_j\xi_{ki}=0,\qquad \mbox{ for all } i,j,k,$$
which is clearly a consequence of \eqref{s31}.

We finally interpret the second equation in \eqref{sxzodd}. For $k=1,\ldots,m$ and $x\in\mv$, we have by \eqref{da2}--\eqref{da3}:
$$z_k\lrcorner\ (\frac1{4}\ x\lrcorner\ \dd\alpha_1-\frac{1}{n-2} x\wedge \dd^*\alpha_3)=\frac12\sum_{s=1}^{m}(A_sx\wedge\xi_{sk}+g(x,\xi_{sk})A_s)-\frac{1}{n-2} x\wedge(A_k\lrcorner\ \alpha_3),$$
and 
\begin{eqnarray*}z_k\lrcorner\ \pi_{3,2}(\nabla_x (\alpha_1+\alpha_3))&=&\pi_{2,2}(z_k\lrcorner\ \nabla_x (\alpha_1+\alpha_3))\\
&=&\pi_{2,2}(\nabla_x(z_k\lrcorner\ \alpha_1)-\nabla_xz_k\lrcorner\  (\alpha_1+\alpha_3))\\
&=&\pi_{2,2}(\nabla_x(2\sum_{s=1}^{m}\xi_{sk}\wedge z_s))+\frac12 A_kx\lrcorner\ \alpha_3\\
&=&\sum_{s=1}^{m}A_sx\wedge \xi_{sk}+\frac12 A_kx\lrcorner\ \alpha_3.
\end{eqnarray*}

Consequently, the second equation in \eqref{sxzodd} is equivalent to
\begin{equation}\label{s2}A_kx\lrcorner\ \alpha_3+\sum_{s=1}^{m}(A_sx\wedge\xi_{sk}-g(x,\xi_{sk})A_s)+\frac{2}{n-2} x\wedge(A_k\lrcorner\ \alpha_3)=0,
\end{equation} 
for all $x\in\mv$ and $k=1,\ldots,m$. We denote by $\beta_k:=-\frac{1}{n-2}A_k\lrcorner\ \alpha_3$, so \eqref{s31}--\eqref{s2} read
\begin{equation}\label{s311}A_k\xi_{ij}=\delta_{ik}\beta_j-\delta_{jk}\beta_i,\qquad \mbox{ for all } i,j,k=1,\ldots,m,\qquad
\end{equation}
and
\begin{equation}\label{s21}A_kx\lrcorner\ \alpha_3+\sum_{s=1,s\ne k}^{m}(A_sx\wedge\xi_{sk}-g(x,\xi_{sk})A_s)-2 x\wedge\beta_k=0,
\end{equation} 
for all $ x\in\mv$ and $k=1,\ldots,m$. We thus obtain that \eqref{sxzodd} is equivalent to the system of equations given by \eqref{s4}, \eqref{s311} and \eqref{s21}.

Taking the interior product with $A_kx$ in \eqref{s21} and using \eqref{s311} yields
\begin{equation}\label{s22}\sum_{s=1,s\ne k}^{m}\left(g(A_kx,A_sx)\xi_{sk}-g(\beta_s,x)A_sx-g(x,\xi_{sk})A_sA_kx\right)+2 g(\beta_k,A_kx)x=0,
\end{equation} 
for all $x\in\mv$ and $k=1,\ldots,m$. Recall that for every $j\ne k$ we have $A_k\xi_{jk}=-\beta_j$ and $A_j\xi_{jk}=\beta_k$, and for  all $s\ne j,k$ we have $A_s\xi_{jk}=0$. Thus \eqref{s22} applied to $x=\xi_{jk}$ reads
$$-g(\beta_j,\beta_k)\xi_{jk}-g(\beta_j,\xi_{jk})\beta_k+\sum_{s=1,s\ne k}^{m}g(\xi_{jk},\xi_{sk})A_s\beta_j-2g(\beta_k,\beta_j)\xi_{jk}=0,$$
and using that $g(\beta_j,\xi_{jk})=-g(A_k\xi_{jk},\xi_{jk})=0$ (since $A_k$ is skew-symmetric), we obtain
\begin{equation}\label{s26}3g(\beta_j,\beta_k)\xi_{jk}=\sum_{s=1,s\ne k}^{m}g(\xi_{jk},\xi_{sk})A_s\beta_j,\quad\mbox{ for all } x\in\mv,\mbox{ and } j\ne k.\end{equation} 
Taking the scalar product with $\xi_{jk}$ in \eqref{s26} yields
$$3g(\beta_j,\beta_k)||\xi_{jk}||^2=-\sum_{s=1,s\ne k}^{m}g(\xi_{jk},\xi_{sk})g(\beta_j,A_s\xi_{jk})=-||\xi_{jk}||^2g(\beta_j,\beta_k),\quad\mbox{ for all }  j\ne k.$$

If $g(\beta_j,\beta_k)\ne 0$, the above relation gives $\xi_{jk}=0$, so $\beta_k=A_j\xi_{jk}=0$, a contradiction. Thus 
\begin{equation}\label{s23}g(\beta_j,\beta_k)=0,\quad\mbox{ for all }  j\ne k.
\end{equation} 

Taking the scalar product with $\xi_{ij}$ in \eqref{s26} for some $i\ne j,k$ and using \eqref{s23} gives
$$0=\sum_{s=1,s\ne k}^{m}g(\xi_{jk},\xi_{sk})g(A_s\beta_j,\xi_{ij})=-g(\xi_{jk},\xi_{ik})||\beta_j||^2,$$
whence 
\begin{equation}\label{s27}g(\xi_{jk},\xi_{ik})\beta_j=0,\qquad  \mbox{ for all } i\ne j\ne k\ne i.
\end{equation} 

We next take $x=x_a$ in \eqref{s22} and sum over $a=1,\ldots,p$ to obtain
$$\sum_{s=1,s\ne k}^{m}\left(-\tr(A_sA_k)\xi_{sk}-A_s\beta_s-A_sA_k\xi_{sk}\right)-2A_k\beta_k=0, \quad\mbox{ for all }  k=1,\ldots,m,$$
whence, using \eqref{s311} again,
\begin{equation}\label{s24}2A_k\beta_k+\sum_{s=1,s\ne k}^{m}\tr(A_sA_k)\xi_{sk}=0, \quad\mbox{ for all }  k=1,\ldots,m.
\end{equation} 

Applying $A_k$ to this equation, taking the scalar product with $\beta_k$ and using \eqref{s23}, yields,
\begin{eqnarray*}||A_k\beta_k||^2&=&-g(A_k^2\beta_k,\beta_k)=\frac12\sum_{s=1,s\ne k}^{m}\tr(A_sA_k)g(A_k\xi_{sk},\beta_k)\\&=&-\frac12\sum_{s=1,s\ne k}^{m}\tr(A_sA_k)g(\beta_s,\beta_k)=0,\end{eqnarray*}
thus showing that
 \begin{equation}\label{s25}A_k\beta_k=0, \quad\mbox{ for all }  k=1,\ldots,m.
\end{equation} 

The following technical result shows that if $\alpha$ is non-zero, then at least two of the vectors $\beta_i$,  $i=1, \ldots, m$ are non-zero.

\begin{lm}\label{lm:bvanish}
If $\alpha$ is non-vanishing, then there exist $i, j\in \{1, \ldots, m\}$  with $i\neq j$ such that $\beta_i\neq 0\neq \beta_j$. In particular, the center of $\mn$ has dimension $m\geq 2$.
\end{lm}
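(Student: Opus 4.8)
The plan is to argue by contraposition: assuming that at most one of the vectors $\beta_i$ is non-zero, I will show that $\alpha=0$, which establishes the contrapositive of the first assertion; the statement $m\ge 2$ then follows since two distinct non-zero $\beta_i$ cannot exist when $m=1$. There are two cases. Suppose first that \emph{all} $\beta_i=0$. Then \eqref{s311} reads $A_k\xi_{ij}=0$ for all indices, so each $\xi_{ij}$ lies in $\bigcap_k\ker A_k=0$ by \eqref{int}, whence $\alpha_1=0$. Substituting $\xi_{ij}=0$ and $\beta_k=0$ into \eqref{s21} leaves $A_kx\lrcorner\alpha_3=0$ for every $x\in\mv$ and every $k$; since $\bigcap_k\ker A_k=0$ forces $\sum_k\im A_k=\mv$, the form $\alpha_3$ is annihilated by contraction with every vector of $\mv$, so $\alpha_3=0$ and $\alpha=0$.

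The substantial case, and the main obstacle, is when exactly one $\beta_i$ is non-zero, say $\beta_1\ne 0$ and $\beta_k=0$ for $k\ne 1$. If $m=1$ this is impossible at once: $\mn$ is then Heisenberg, $A_1$ is invertible, and \eqref{s25} gives $A_1\beta_1=0$, hence $\beta_1=0$; so we may assume $m\ge 2$. From \eqref{s311} one reads off that $\xi_{ij}=0$ whenever $i,j\ne 1$, while for $j\ne 1$ the vector $\xi_{1j}$ satisfies $A_j\xi_{1j}=-\beta_1$ and $A_k\xi_{1j}=0$ for all $k\ne j$. The key is to exploit \eqref{s21} for indices $k\ne 1$, where the only surviving term of the sum is $s=1$ and $\beta_k=0$; denote this identity by $(\ast)$, namely $A_kx\lrcorner\alpha_3=-A_1x\wedge\xi_{1k}+g(x,\xi_{1k})A_1$. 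Substituting $x=\xi_{1k}$ into $(\ast)$ and using $A_k\xi_{1k}=-\beta_1$ and $A_1\xi_{1k}=0$ yields $\beta_1\lrcorner\alpha_3=-\|\xi_{1k}\|^2A_1$. As the left-hand side does not depend on $k$, all the norms $\|\xi_{1k}\|^2$ equal a common value $c$, which must be positive: otherwise every $\xi_{1k}$, and hence $\beta_1=-A_k\xi_{1k}$, would vanish.

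With this relation the contradiction is reached in three moves. First, contracting $(\ast)$ with $\beta_1$ and using $A_1\beta_1=0$ (from \eqref{s25}) together with $g(\beta_1,\xi_{1k})=-g(A_k\xi_{1k},\xi_{1k})=0$ collapses the right-hand side to zero, leaving $c(A_1A_kx)^\flat=0$; since $c>0$ this forces $A_1A_k=0$, and by skew-symmetry $A_kA_1=0$, for every $k\ne 1$. Second, I evaluate $(\ast)$ on $x\in\im A_1$: the left side vanishes because $A_k$ kills $\im A_1$, and the term $g(x,\xi_{1k})A_1$ drops out because $\xi_{1k}\in\ker A_1=(\im A_1)^\perp$, so $A_1x\wedge\xi_{1k}=0$; as $A_1$ maps $\im A_1$ onto itself, this says $w\wedge\xi_{1k}=0$ for all $w\in\im A_1$, which is impossible for the non-zero vector $\xi_{1k}\perp\im A_1$ unless $\im A_1=0$, i.e. $A_1=0$. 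Third, once $A_1=0$ the identity $(\ast)$ degenerates to $A_kx\lrcorner\alpha_3=0$ for all $x$ and all $k\ne 1$; since $A_1=0$ forces $\bigcap_{k\ne 1}\ker A_k=0$ and hence $\sum_{k\ne 1}\im A_k=\mv$, we again obtain $\alpha_3=0$, whence $\beta_1=-\tfrac{1}{n-2}A_1\lrcorner\alpha_3=0$, a contradiction. This rules out the second case, so a non-zero $\alpha$ forces at least two non-zero $\beta_i$, and in particular $m\ge 2$. The delicate point throughout is the rigid interplay in $(\ast)$ between the single non-zero $\beta_1$ and the vectors $\xi_{1k}$: it is this that first yields $A_1A_k=0$ and then collapses $A_1$ itself to zero.
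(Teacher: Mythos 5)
Your proof is correct, and its endgame is genuinely different from the paper's. The paper follows the same contrapositive scheme and, like you, reduces to the case $\beta_i=0$ for all $i\neq 1$, but the two arguments then diverge: the paper works with the contracted identity \eqref{s22}, polarizes it, and after the dichotomy $\xi_{12}=0$ versus $A_1A_2=0$ (and then $A_1=0$ or $\beta_1=0$) concludes that \emph{all} $\beta_i$ vanish; at that point it invokes the external result \cite[Proposition 5.1]{va} (namely $\mathcal{K}^3_{\rm odd}(\mn,g)=0$) to deduce $\alpha=0$, contradicting $\alpha\neq 0$. You never need that citation: your Case 1 proves directly that $\beta_i=0$ for all $i$ forces $\alpha_1=0$ (by \eqref{s311} and \eqref{int}) and then $\alpha_3=0$ (from $A_kx\lrcorner\,\alpha_3=0$ together with $\sum_k\im A_k=\mv$), which is in effect a self-contained re-proof of the case of \cite[Proposition 5.1]{va} needed here; and your Case 2 reaches a contradiction by contracting the identity $(\ast)$ coming from \eqref{s21} with $\beta_1$ and then evaluating it on $\im A_1$, instead of the paper's polarization of \eqref{s22}. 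Both routes pass through $A_1A_k=0$ and the collapse $A_1=0$, so the algebraic heart is the same; what yours buys is self-containedness (only \eqref{int}, \eqref{s25}, \eqref{s311}, \eqref{s21} are used), while the paper's is shorter given the results already available from \cite{va}.

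One harmless slip: the inference that the left-hand side of $\beta_1\lrcorner\,\alpha_3=-\|\xi_{1k}\|^2A_1$ is independent of $k$, hence all $\|\xi_{1k}\|^2$ equal a common value $c$, is only valid when $A_1\neq 0$. This does not affect the proof: all you actually use is $\|\xi_{1k}\|^2>0$ for each fixed $k$ (which follows from $\beta_1=-A_k\xi_{1k}\neq 0$), and the conclusions of your first two moves hold trivially when $A_1=0$. Note also that your third move can be shortened: once $A_1=0$, the contradiction $\beta_1=-\tfrac{1}{n-2}A_1\lrcorner\,\alpha_3=0$ is immediate from the definition of $\beta_1$, with no need to first establish $\alpha_3=0$.
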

\begin{proof}
Suppose for a contradiction that $\alpha\neq 0$ and there exists $\ell\in \{1, \ldots, m\}$ such that $\beta_i=0$ for all $i\in \{1,\ldots,m\}\bs\{\ell\}$. 
 
By relabelling the subscripts, we may assume that $\ell=1$, that is, $\beta_i=0$ for all $i=2,\ldots m$. We claim that $\beta_1=0$.

If $m=1$, then \eqref{s25} together with \eqref{int} readily imply $\beta_1=0$, thus proving our claim in this case. Assume now that $m\ge 2$.

From \eqref{s311} we obtain $A_k\xi_{ij}=0$ for all $i,j\ge 2$ and for every $k$, so by \eqref{int}, $\xi_{ij}=0$ for every $i,j\ge 2$. For $k=2$, \eqref{s22} reads
\begin{equation}\label{s29}g(A_2x,A_1x)\xi_{12}-g(\beta_1,x)A_1x-g(x,\xi_{12})A_1A_2x=0,\qquad   \mbox{ for all } x\in\mv.
\end{equation} 

Polarizing this equality yields
\begin{eqnarray}\label{s40}\qquad 0&=&(g(A_2x,A_1y)+g(A_2y,A_1x))\xi_{12}-g(\beta_1,x)A_1y\\
&&-g(x,\xi_{12})A_1A_2y-g(\beta_1,y)A_1x-g(y,\xi_{12})A_1A_2x, \qquad   \mbox{ for all } x,y\in\mv.\nonumber
\end{eqnarray} 
Applying this to $y=\xi_{12}$ and using \eqref{s311} and \eqref{s25}, we obtain
$$0=-g(\beta_1,\xi_{12})A_1x-||\xi_{12}||^2A_1A_2x, \qquad   \mbox{ for all } x\in\mv.$$
Since $g(\beta_1,\xi_{12})=-g(A_2\xi_{12},\xi_{12})=0$, this equation reads
\begin{equation}\label{eq:xiAA}
0=||\xi_{12}||^2A_1A_2.
\end{equation}
If $\xi_{12}=0$, then $\beta_1=-A_2\xi_{12}=0$ proving our claim. Otherwise, if $\xi_{12}\ne 0$, then  \eqref{eq:xiAA} implies $A_1A_2=0$, so reinjecting this into \eqref{s40} gives 
$$-g(\beta_1,x)A_1y-g(\beta_1,y)A_1x=0, \qquad   \mbox{ for all } x,y\in\mv.$$
Taking $y=\beta_1$ in this last equation and using \eqref{s25} we obtain that either $A_1=0$ or $\beta_1=0$. We finish the proof of the claim by showing that $A_1= 0$ also implies $\beta_1=0$.

Indeed, if $A_1=0$ then \eqref{s4} for $k=1$ yields $0=\sum_{s=1}^{m}A_s\wedge\xi_{s1},$ so 
$$\sum_{s=1}^{m}(A_sx\wedge\xi_{s1}+g(x,\xi_{s1})A_s)=0, \qquad   \mbox{ for all } x\in\mv.$$

This, together with \eqref{s2} for $k=1$ gives
$$\sum_{s=1}^{m}g(x,\xi_{s1})A_s=0, \qquad   \mbox{ for all } x\in\mv.$$
Applying this equality to $\xi_{21}$, taking $x=\xi_{21}$ and using \eqref{s311}, yields $||\xi_{21}||^2\beta_1=0$. Since $\xi_{21}\neq 0$, we finally get $\beta_1=0$.

We  thus proved our claim: $\beta_i=0$ for all $i=1, \ldots,m$. Hence, by \eqref{dda1} and \eqref{da3}, and taking into account that $\beta_s=-\frac{1}{n-2}A_s\lrcorner\ \alpha_3$, we get $\dd^*\alpha=0$ and therefore $\alpha$ is a Killing $3$-form of odd $\mv$-degree. On the other hand from \cite[Proposition 5.1]{va} we know that $\mathcal{K}_{\rm odd}^3(\mn,g)=0$. This contradicts the assumption $\alpha\neq 0$ thus concluding the proof.
\end{proof}

We are now ready to describe the space of conformal Killing $3$-forms of odd $\mv$-degree for every 2-step nilpotent metric Lie algebra $(\mn,g)$. This will be done by considering the different possibilities for the dimension of the center of $\mn$.

\begin{pro}\label{pro:zleq2} On a $2$-step nilpotent metric Lie algebra $(\mn,g)$ with $\dim\mz\leq 2$, every conformal Killing $3$-form of odd $\mv$-degree vanishes.
\end{pro}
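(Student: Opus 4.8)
The plan is to split the argument according to $m:=\dim\mz$. For $m\le 1$ the statement is immediate from Lemma~\ref{lm:bvanish}: a nonzero conformal Killing $3$-form of odd $\mv$-degree would force two \emph{distinct} indices $i\ne j$ with $\beta_i,\beta_j\ne 0$, hence $m\ge 2$; so for $m\le 1$ every such form vanishes. All the work is therefore in the case $m=2$, which I would treat by contradiction, assuming $\alpha\ne 0$.

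First I would specialize the reduced system \eqref{s4}, \eqref{s311}, \eqref{s21} (and its consequences \eqref{s23}--\eqref{s25}) to $m=2$. Writing $\xi:=\xi_{12}=-\xi_{21}\in\mv$, equation \eqref{s311} collapses to $A_1\xi=\beta_2$ and $A_2\xi=-\beta_1$, while \eqref{s25} gives $A_1\beta_1=A_2\beta_2=0$ and \eqref{s23} gives $g(\beta_1,\beta_2)=0$. By Lemma~\ref{lm:bvanish}, both $\beta_1$ and $\beta_2$ are nonzero (there being only two indices), so in particular $\xi\ne 0$ and $A_1,A_2\ne 0$; moreover $\xi,\beta_1,\beta_2$ are mutually orthogonal, since $g(\xi,\beta_2)=g(\xi,A_1\xi)=0$, $g(\xi,\beta_1)=-g(\xi,A_2\xi)=0$ and $g(\beta_1,\beta_2)=0$.

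Next I would extract the needed trilinear values of $\alpha_3$ directly from \eqref{s21}. Evaluating \eqref{s21} (for $k=2$ and $k=1$) at $x=\xi$, and using $A_1\xi=\beta_2$, $A_2\xi=-\beta_1$, yields closed expressions for $\beta_1\lrcorner\alpha_3$ and $\beta_2\lrcorner\alpha_3$; pairing these with $\beta_2$ and $\beta_1$ respectively gives two formulas for the $1$-form $w\mapsto \alpha_3(\beta_1,\beta_2,w)$, whose agreement forces $\|\beta_1\|=\|\beta_2\|$ (call the common square $b>0$) and leaves the single expression
\[
\alpha_3(\beta_1,\beta_2,w)=3b\,g(\xi,w)-\|\xi\|^2 g(\eta,w),\qquad \eta:=A_1^2\xi .
\]
Separately, evaluating \eqref{s21} for $k=1$ at $x=\beta_2$ (using $g(\beta_2,\xi)=0$, $A_2\beta_2=0$ and $A_1\beta_2=\eta$) gives the clean contraction $\eta\lrcorner\alpha_3=2\,\beta_2\wedge\beta_1$, hence $\alpha_3(\eta,\beta_1,\beta_2)=-2b^2$.

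The crux of the argument, and the step I expect to be the main obstacle, is to combine these into a sign-definite identity. Comparing the two evaluations of the same quantity through the total antisymmetry of $\alpha_3$, namely $\alpha_3(\beta_1,\beta_2,\eta)=\alpha_3(\eta,\beta_1,\beta_2)$, and using $g(\xi,\eta)=g(\xi,A_1^2\xi)=-\|A_1\xi\|^2=-b$, I obtain
\[
-3b^2-\|\xi\|^2\|\eta\|^2=-2b^2,\qquad\text{that is}\qquad \|\xi\|^2\|\eta\|^2=-b^2<0,
\]
which is absurd. This contradiction shows $\alpha=0$, completing the case $m=2$. The difficulty lies precisely in locating this contraction: most pairings of the reduced equations merely reproduce relations already known (for instance the norm identities obtained by pairing \eqref{s21} with $A_1$ and $A_2$, which constrain $\|A_k\|$ and $\|\beta_k\|$ but do not close up), whereas it is the antisymmetry of $\alpha_3$ evaluated on the distinguished triple $(\beta_1,\beta_2,\eta)$ with $\eta=A_1^2\xi$ that produces the definite sign and hence the contradiction.
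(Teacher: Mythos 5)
Your proof is correct: every step checks out against the conventions and the reduced system \eqref{s311}, \eqref{s23}, \eqref{s25}, \eqref{s21} established in the paper, and the final sign contradiction is genuine. The skeleton agrees with the paper's proof --- Lemma~\ref{lm:bvanish} disposes of $\dim\mz=1$ and, for $\dim\mz=2$, guarantees $\beta_1\neq0\neq\beta_2$ and hence $\xi\neq0$, after which one specializes \eqref{s21} --- but the decisive computation is genuinely different. The paper takes $x=\beta_1$ in \eqref{s21} with $k=1$ (its equation \eqref{s433}) to get $A_2\beta_1=\|\beta_1\|^2\,\xi$ after normalizing $\|\xi\|=1$, and then contracts \eqref{s433} with $A_1x$ followed by $\beta_1$ to obtain $2\|\beta_1\|^2\,g(x,\xi)\,g(x,\beta_2)=0$ for all $x\in\mv$, forcing $\beta_1=0$ or $\beta_2=0$, a contradiction; it never evaluates $\alpha_3$ as a trilinear form. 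You instead take $x=\xi$ in both the $k=1$ and $k=2$ equations to get closed formulas for $\beta_1\lrcorner\,\alpha_3$ and $\beta_2\lrcorner\,\alpha_3$, deduce $\|\beta_1\|=\|\beta_2\|$ (this step, which you assert without detail, does follow: pair the agreement identity with $\xi$ and use $g(A_2\beta_1,\xi)=\|\beta_1\|^2$, $g(A_1\beta_2,\xi)=-\|\beta_2\|^2$, giving $4\|\xi\|^2(\|\beta_1\|^2-\|\beta_2\|^2)=0$), then take $x=\beta_2$, $k=1$ to get $\eta\lrcorner\,\alpha_3=2\,\beta_2\wedge\beta_1$, and close with the cyclic invariance $\alpha_3(\eta,\beta_1,\beta_2)=\alpha_3(\beta_1,\beta_2,\eta)$, which yields $\|\xi\|^2\|\eta\|^2=-b^2<0$. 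Your route is a bit longer (three substitutions and trilinear evaluations versus the paper's two substitutions and purely bilinear identities), but it produces extra structural information along the way, namely $\|\beta_1\|=\|\beta_2\|$ and $A_2\beta_1=-A_1\beta_2$; the paper's argument is more economical. Both are equally rigorous, so there is nothing to fix beyond writing out the short pairing computation behind your ``agreement forces $\|\beta_1\|=\|\beta_2\|$'' claim.
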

\begin{proof} Let $(\mn,g)$ be a 2-step nilpotent metric Lie algebra. 
If $\dim\mz=1$, then the result follows from Lemma~\ref{lm:bvanish}. So we consider the case $m=\dim \mz= 2$.

Assume that there exists a non-zero conformal Killing $3$-form $\alpha$ of odd $\mv$-degree.  We write as before $\alpha=\alpha_3+\alpha_1$ with $\alpha_3\in\Lambda^3\mv^*$ and $\alpha_1$ as in \eqref{eq:alpha1}, for a fixed orthonormal basis $z_1,z_2$ of $\mz$. To simplify the notation, we denote by $\xi:=\xi_{12}=-\xi_{21}$. From \eqref{s311} we obtain
\begin{equation}\label{s50} A_1\xi=\beta_2,\qquad A_2\xi=-\beta_1.
\end{equation}
By Lemma~\ref{lm:bvanish}, $\beta_1\neq 0\neq \beta_2$ so \eqref{s50} implies $\xi\ne 0$. By rescaling $\alpha$ if necessary, we may assume that $||\xi||=1$.
Formula \eqref{s21} for $k=1$ becomes
\begin{equation}\label{s433}
A_1x\lrcorner\ \alpha_3-A_2x\wedge\xi+g(x,\xi) A_2-2 x\wedge\beta_1=0,\quad\mbox{ for all }  x\in\mv.
\end{equation}
Taking $x=\beta_1$ in this equation and using \eqref{s25} yields $A_2\beta_1\wedge\xi=0$. Thus $A_2\beta_1=a\xi$ where the real number $a$ is given by
$a=g(\xi,A_2\beta_1)=-g(\beta_1,A_2\xi)=||\beta_1||^2$. Consequently, 
\begin{equation}\label{s441}
A_2\beta_1= ||\beta_1||^2\xi.
\end{equation}

Now we make the interior product in \eqref{s433}  with $A_1x$ and then take the inner product with $\beta_1$. Using \eqref{s25} again we obtain
\[
g(A_1A_2x,x)g(\xi,\beta_1)+g(\xi,A_1x)g(A_2x,\beta_1)+g(x,\xi)g(A_2A_1x,\beta_1)=0, \quad\mbox{ for all }  x\in \mv.
\]
This equation, in view of \eqref{s50} and \eqref{s441}, is equivalent to 
\begin{equation}
\label{s46}
2||\beta_1||^2g(x,\xi)g(x,\beta_2)=0, \quad\mbox{ for all }  x\in \mv.
\end{equation}

Since $\xi$ is non-zero, this implies that either $\beta_1=0$ or $\beta_2=0$, which contradicts Lemma~\ref{lm:bvanish}. This concludes the proof.
\end{proof}

Since the case where $\dim\mz=3$ is more involved, we treat next the case $\dim \mz\geq 4$.

\begin{pro}\label{pro:zgeq4} On a $2$-step nilpotent metric Lie algebra $(\mn,g)$ with $\dim\mz\geq 4$, every conformal Killing $3$-form of odd $\mv$-degree vanishes.
\end{pro}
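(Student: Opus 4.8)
The plan is to rule out a non-zero conformal Killing $3$-form $\alpha$ of odd $\mv$-degree when $m:=\dim\mz\geq4$. As at the end of the proof of Lemma~\ref{lm:bvanish}, it is enough to prove that all the vectors $\beta_i$ vanish: if $\beta_i=0$ for every $i$, then \eqref{da3} and \eqref{dda1} give $\dd^*\alpha=0$, so $\alpha$ is a Killing $3$-form of odd $\mv$-degree and therefore $\alpha=0$ by \cite[Proposition 5.1]{va}. So I argue by contradiction: I assume $\alpha\neq0$ and set $S:=\{i : \beta_i\neq0\}$, so that $|S|\geq2$ by Lemma~\ref{lm:bvanish}. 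Throughout I use the relations \eqref{s311}, \eqref{s23}, \eqref{s25}, \eqref{s27} and \eqref{s21}, together with two elementary observations: by \eqref{s311} one has $\beta_j\in\im A_s$ for every $s\neq j$ (since $A_s\xi_{sj}=\beta_j$) and $\beta_j\perp\xi_{ij}$; and since each $A_s$ is skew-symmetric, $\im A_s\perp\ker A_s$, hence a vector lying in both is zero. The entire strategy consists in producing, for some $j\in S$, an index $s\neq j$ with $A_s\beta_j=0$, which then forces $\beta_j\in\im A_s\cap\ker A_s=\{0\}$, a contradiction.

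The computational engine is a master identity obtained by substituting $x=\xi_{\ell k}$ into \eqref{s21}: evaluating $A_k\xi_{\ell k}=-\beta_\ell$ and $\sum_s A_s\xi_{\ell k}\wedge\xi_{sk}=\beta_k\wedge\xi_{\ell k}$ with \eqref{s311}, and discarding the off-diagonal products $g(\xi_{\ell k},\xi_{sk})$ by \eqref{s27} (legitimate because $\beta_\ell\neq0$), I obtain for every $\ell\in S$ and every $k\neq\ell$
\[
\|\xi_{\ell k}\|^2\,A_\ell=3\,\beta_k\wedge\xi_{\ell k}-\beta_\ell\lrcorner\alpha_3 .
\]

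Suppose first that $S\neq\{1,\dots,m\}$, and fix $k_0\notin S$; then $\beta_{k_0}=0$ and the identity gives $\beta_\ell\lrcorner\alpha_3=-\|\xi_{\ell k_0}\|^2 A_\ell$ for each $\ell\in S$. Reinserting this for an index $k\in S\setminus\{\ell\}$ shows that $A_\ell$ is a nonzero multiple of the decomposable $2$-form $\beta_k\wedge\xi_{\ell k}$, so $\rk A_\ell=2$ and $\im A_\ell=\spam\{\beta_k,\xi_{\ell k}\}$. When $|S|\geq3$, comparing the images obtained from two distinct $k,k'\in S\setminus\{\ell\}$ and using $\beta_k\perp\beta_{k'}$ forces a relation $\xi_{\ell k}\parallel\beta_{k'}$; since $A_s\xi_{\ell k}=0$ for every $s\notin\{\ell,k\}$ by \eqref{s311}, any such $s$ (one exists because $m\geq4$) satisfies $A_s\beta_{k'}=0$, yielding the contradiction above. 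The subcase $|S|=2$ is similar but requires centering the identity at a spare index $j\notin S$ rather than at $\ell\in S$; the extra orthogonality $g(\xi_{1j},\xi_{1j'})=0$ needed to kill the cross terms there is first extracted from \eqref{s21} evaluated at the appropriate $\xi$'s, after which the same image comparison for $A_j$ gives $\beta_2\parallel\xi_{1j}$ against $\beta_2\perp\xi_{1j}$.

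The hardest case, which I expect to be the main obstacle, is $S=\{1,\dots,m\}$, since then there is no index $k_0\notin S$ to eliminate the term $\beta_\ell\lrcorner\alpha_3$. Here I would contract the master identity once more. Applying the skew endomorphism $\beta_1\lrcorner\alpha_3$ to $\xi_{1j}$ and using $A_1\xi_{1j}=\beta_j$ gives $\alpha_3(\beta_1,\xi_{1j},\cdot)=-4\|\xi_{1j}\|^2\beta_j$; evaluating the same vector through the master identity with a second index $k\neq j$ (again discarding $g(\xi_{1k},\xi_{1j})=0$) gives
\[
3\,g(\beta_k,\xi_{1j})\,\xi_{1k}=\bigl(\|\xi_{1k}\|^2-4\|\xi_{1j}\|^2\bigr)\beta_j .
\]
If for some pair one had both $\xi_{1k}\nparallel\beta_j$ and $\xi_{1j}\nparallel\beta_k$, the independence of the two sides would force $\|\xi_{1k}\|^2=4\|\xi_{1j}\|^2$ and $\|\xi_{1j}\|^2=4\|\xi_{1k}\|^2$, hence $\xi_{1k}=0$, which is impossible since $\beta_k\neq0$. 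Thus a parallelism such as $\xi_{1k}\parallel\beta_j$ always holds; choosing $s\notin\{1,j,k\}$ (available because $m\geq4$) we again get $A_s\beta_j=0$ from $A_s\xi_{1k}=0$, so $\beta_j\in\im A_s\cap\ker A_s=\{0\}$, the final contradiction. In every regime the vanishing of the cross terms, and the existence of the spare index $s$, rely precisely on having at least four central directions, which is where the hypothesis $m\geq4$ enters.
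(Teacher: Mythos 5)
Your proposal is correct --- the master identity, the image comparisons, and the final contradictions all check out --- but it follows a genuinely different route from the paper's proof. The paper invokes the hypothesis $\dim\mz\geq 4$ exactly once, at the very start, to establish the universal orthogonality \eqref{s43}: $g(\xi_{ij},\beta_k)=0$ for \emph{all} $i,j,k$ (choose $r\notin\{i,j,k\}$ and write $g(\xi_{ij},\beta_k)=g(\xi_{ij},A_r\xi_{rk})=-g(A_r\xi_{ij},\xi_{rk})=0$). With this in hand there is no case analysis at all: taking the scalar product of \eqref{s22} with $\xi_{ik}$, killing the cross terms via \eqref{s27}, polarizing, and substituting $x=\xi_{kj}$, every unwanted term dies by \eqref{s43} and one lands directly on $A_i\beta_j=0$, whence $\beta_j=A_i\xi_{ij}=0$ --- the very same $\im A\cap\ker A=0$ mechanism you use. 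You never derive \eqref{s43}; instead you stratify by the support $S=\{i:\beta_i\neq 0\}$ and use your master identity (a correct consequence of \eqref{s21} at $x=\xi_{\ell k}$ together with \eqref{s27}) to show that the relevant $A_\ell$ are non-zero multiples of the decomposable forms $\beta_k\wedge\xi_{\ell k}$, hence of rank two, and then compare images. Your route yields extra structural information (the rank-two, decomposable nature of the $A_\ell$, echoing the $\dim\mz=3$ analysis of Proposition~\ref{pro:3formsodd}), but at the cost of three sub-cases and greater length, whereas the paper's orthogonality trick collapses everything into a single computation. Two details need to be supplied to make your write-up complete: (i) in the $|S|\geq 3$ sub-case the spare index must also avoid $k'$, i.e. $s\notin\{\ell,k,k'\}$ --- this is precisely what $m\geq 4$ guarantees --- since the contradiction requires $\beta_{k'}=A_s\xi_{sk'}\in\im A_s$; and (ii) in the $|S|=2$ sub-case the asserted orthogonality $g(\xi_{j1},\xi_{j'1})=0$ for $j,j'\notin S$ does follow from \eqref{s21} as you claim, but only after observing that $\xi_{jj'}=0$ for $j,j'\notin S$ (by \eqref{s311} and \eqref{int}) and that $A_1,A_2$ are linearly independent (from their decomposable expressions), so this step should be written out rather than asserted.
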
 
\begin{proof} Let $(\mn,g)$ be  a 2-step nilpotent metric Lie algebra  with center of dimension $m\geq 4$. Let $\alpha$ be a conformal Killing $3$-form on $(\mn,g)$ of odd $\mv$-degree. We write $\alpha=\alpha_3+\alpha_1$, with $\alpha_3\in\Lambda^3\mv^*$ and $\alpha_1$ as in \eqref{eq:alpha1}, where $z_1,\ldots, z_m$ is a fixed orthonormal basis of $\mz$. We claim that:
\begin{equation}\label{s43}
g(\xi_{ij},\beta_k)=0, \quad\mbox{ for all } \, i,j, k=1, \ldots, m.
\end{equation}
Indeed, since $m\geq 4$, given $i,j,k\in \{1, \ldots, m\}$ we can find $r\in \{1, \ldots,m\}$ such that $r\notin\{i,j,k\}$. Using \eqref{s311}, we obtain 
\begin{equation*}
g(\xi_{ij},\beta_k)=g(\xi_{ij},A_r\xi_{rk})=-g(A_r\xi_{ij},\xi_{rk})=0,
\end{equation*}as claimed.

Suppose for a contradiction that $\alpha\neq 0$. Then, by Lemma~\ref{lm:bvanish}, there exist $i\neq j$ such that  $\beta_i\neq 0\neq \beta_j$. Let $k$ be such that $i\neq k\neq j$. Taking the scalar product of the expression in \eqref{s22} with $\xi_{ik}$, and using \eqref{s25}, we get
\begin{equation}\label{s41}
\sum_{s=1,s\ne k}^{m}g(A_kx,A_sx)g(\xi_{sk},\xi_{ik})+g(\beta_i,x)g(\beta_k,x)=0,\quad\mbox{ for all }  x\in \mv.
\end{equation} 
Since $\beta_i$ is nonzero, \eqref{s41} and \eqref{s27} account to
\begin{equation}\label{s42}
g(A_kx,A_ix)||\xi_{ik}||^2+g(\beta_i,x)g(\beta_k,x)=0,\quad\mbox{ for all }  x\in \mv.
\end{equation} 
Polarizing this equation one gets for all $x,y\in \mv$:
\begin{equation*}
(g(A_kx,A_iy)+g(A_ky,A_ix))||\xi_{ik}||^2+g(\beta_i,x)g(\beta_k,y)+g(\beta_i,y)g(\beta_k,x)=0.
\end{equation*} 
We take $x=\xi_{kj}$ in this equation and use \eqref{s311} and \eqref{s43} to obtain
\begin{equation}\label{s431}
g(\beta_j,A_iy)||\xi_{ik}||^2=0,\quad\mbox{ for all }  y\in \mv.
\end{equation}
Notice that $\xi_{ik}\neq 0$ since $\beta_i=-A_{k}\xi_{ik}$ is non-zero. So from \eqref{s431} we get $0=A_i\beta_j=A_i^2\xi_{ij}$. Taking the scalar product with $\xi_{ij}$ and using the skew-symmetry of $A_i$ yields $0=A_i\xi_{ij}=\beta_j$, which is a contradiction.
\end{proof}

Finally, we consider the case where $\dim\mz=3$ and start by exhibiting an example of a 2-step nilpotent metric Lie algebra carrying a non-zero conformal Killing 3-form of odd $\mv$-degree. 

Let $\mn_{3,2}$ be the 6-dimensional free 2-step nilpotent Lie algebra on $3$-generators, which admits a basis $v_1,v_2,v_3,u_1, u_2,u_3$ satisfying the Lie bracket relations
\begin{equation}\label{eq:n32b}
[v_1,v_2]=u_3,\quad [v_2,v_3]=u_1,\quad [v_3,v_1]=u_2.
\end{equation}
Clearly the center of $\mn_{3,2}$ is spanned by $u_i$, $i=1, 2,3$.

We consider the one parameter family of metrics $g_\lambda$, with $\lambda\in \R^+$, on $\mn_{3,2}$ defined by the fact that $v_1,v_2,v_3,u_1/\lambda, u_2/\lambda,u_3/\lambda$ is a $g_\lambda$-orthonormal basis. Denoting by $z_i:=u_i/\lambda$ for $i=1,2,3$, one can easily check that the endomorphisms $j(z_i)$ satisfy $[j(z_i),j(z_j)]=\lambda j(z_k)$, for every  even permutation $(i,j,k) $ of $\{1,2,3\}$. Therefore, by \cite[Proposition 5.8]{va}, the space of Killing $3$-forms on  $(\mn_{3,2},g_\lambda)$ is 1-dimensional and spanned by
\[
\eta:= z_1\wedge v_2\wedge v_3+z_2\wedge v_3\wedge v_1 +z_3\wedge v_2\wedge v_1+2 z_1\wedge z_2 \wedge z_3.
\]In particular, $\mathcal{K}_{\rm odd}^3(\mn_{3,2},g_\lambda)=0$ and $\mathcal{K}_{\rm ev}^3(\mn_{3,2},g_\lambda)={\rm span}\{\eta\}$.

Since the Hodge dual of a Killing form is a conformal Killing form, we then obtain that $\ast \eta\in \mathcal{CK}^3(\mn_{3,2},g_\lambda)$, independently of the orientation of $(\mn_{3,2},g_\lambda)$. Moreover, since $\eta$ is of even $\mv$-degree, we actually get that $\ast \eta$ belongs to $\mathcal{CK}_{\rm odd}^3(\mn_{3,2},g_\lambda)$, so this space is non-trivial.

It is easy to see that $(\mn_{3,2},g_\lambda)$ is the only 2-step nilpotent metric Lie algebra of dimension 6 where this construction provides non-trivial conformal Killing $3$-forms of odd $\mv$-degree. Indeed, from \cite[Theorem 5.14]{va}, non-zero Killing 3-forms only exist on $\R^3\oplus \mh_3$, $\mh_3\oplus \mh_3$, $\R\oplus \mh_5$ and $\mn_{3,2}$, and they have even $\mv$-degree by \cite[Theorem 5.1]{va}. For the Hodge dual to have odd $\mv$-degree, one thus needs $\mv$ to be odd-dimensional, which only happens on $\mn_{3,2}$.

We shall now prove that $(\mn_{3,2},g_\lambda)$ are, up to isomorphism, the only 2-step nilpotent metric Lie algebras with $\dim \mz=3$, admitting strict conformal Killing 3-forms of odd $\mv$-degree and also that $\mathcal{CK}_{\rm odd}^3(\mn_{3,2},g_\lambda)$ is spanned by $\ast \eta$.

\begin{pro}\label{pro:3formsodd}On a $2$-step nilpotent metric Lie algebra $(\mn,g)$ with $\dim\mz=3$, every conformal Killing $3$-form of odd $\mv$-degree vanishes, except when $(\mn,g)=(\mn_{3,2},g_\lambda)$ for some $\lambda\in \R^+$ where $\mathcal{CK}_{\rm odd}^3(\mn_{3,2},g_\lambda)$ is $1$-dimensional.
\end{pro}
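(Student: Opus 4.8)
The plan is to specialize the system \eqref{s4}, \eqref{s311}, \eqref{s21} (together with its derived consequences \eqref{s22}--\eqref{s27}) to the case $m=\dim\mz=3$, where the unknowns are the three vectors $\beta_1,\beta_2,\beta_3\in\mv$ and the three vectors $\xi_{12},\xi_{13},\xi_{23}\in\mv$, and to show that a non-zero solution forces $(\mn,g)$ to be $(\mn_{3,2},g_\lambda)$. By Lemma~\ref{lm:bvanish} at least two of the $\beta_i$ are non-zero; by \eqref{s23} the non-zero ones are mutually orthogonal and by \eqref{s25} each satisfies $A_i\beta_i=0$. A first clean consequence I would extract is that all off-diagonal traces vanish: writing \eqref{s24} for each $k$ and using that the relevant $\xi_{sk}$ are linearly independent yields $\tr(A_iA_j)=0$ for $i\neq j$.

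The next step is to rule out the possibility that exactly two of the $\beta_i$ are non-zero. Assuming say $\beta_3=0$ and $\beta_1,\beta_2\neq0$, the relations \eqref{s311} place $\xi_{13},\xi_{23}$ in $\ker A_1\cap\ker A_2$ with $A_3\xi_{13}=-\beta_1$ and $A_3\xi_{23}=-\beta_2$; combining this with \eqref{s26}, \eqref{s27} and \eqref{int} I expect to derive a contradiction, in the same spirit as the argument of Proposition~\ref{pro:zleq2}. Hence all three $\beta_i$ are non-zero, so by \eqref{s23} they are orthogonal and, since $\beta_j=A_i\xi_{ij}$ forces $\xi_{ij}\neq0$, \eqref{s27} then shows that $\xi_{12},\xi_{13},\xi_{23}$ are also mutually orthogonal.

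With all $\beta_i\neq0$ I would pin down the geometry. Using \eqref{s311}, \eqref{s25} and repeated polarization of \eqref{s22} tested against the vectors $\xi_{ij}$ and $\beta_i$, the aim is to show that each $\xi_{ij}$ is proportional to the ``complementary'' $\beta_\ell$ and that all the $\beta_i$ have equal norm; this simultaneously yields the commutator relations $[A_i,A_j]=\lambda A_k$ for a single constant $\lambda>0$ and every cyclic $(i,j,k)$. Finally, since the $\beta_i$ span a three-dimensional subspace preserved by all $A_i$ and, by the above, the $A_i$ vanish on its orthogonal complement, \eqref{int} forces $\mv=\langle\beta_1,\beta_2,\beta_3\rangle$; thus $\dim\mv=3$, the $A_i$ act as a scaled basis of $\so(3)$, and $\mn'=\mz$. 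Matching \eqref{eq:n32b} then identifies $(\mn,g)$ with $(\mn_{3,2},g_\lambda)$.

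It remains to solve the system on $(\mn_{3,2},g_\lambda)$ itself: since $\mv$ is three-dimensional, $\alpha_3=c\,v_1\wedge v_2\wedge v_3$ depends on a single scalar $c$, and \eqref{s4} together with \eqref{s311} determine each $\beta_i$ and then each $\xi_{ij}$ as explicit multiples of $c$. Hence $\alpha$ is determined up to scale, so $\mathcal{CK}^3_{\rm odd}(\mn_{3,2},g_\lambda)$ is one-dimensional (spanned by $\ast\eta$), as claimed. I expect the main obstacle to be precisely the rigidity step of the third paragraph: unlike the case $\dim\mz\geq4$ treated in Proposition~\ref{pro:zgeq4}, for $m=3$ there is no spare index $r\notin\{i,j,k\}$ available to kill the scalar products, so the orthogonality of the $\xi_{ij}$, their proportionality to the $\beta_\ell$, and the precise value of $\lambda$ all have to be squeezed out of \eqref{s22} by hand.
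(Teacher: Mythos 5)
Your outline reproduces the easy half of the argument (the orthogonality relations \eqref{s500}-type statements coming from \eqref{s23}, \eqref{s27}, and the collinearity of each $\beta_k$ with $\xi_{ij}$), but it has a genuine gap exactly at the decisive step: you write that, the $\beta_i$ spanning a three-dimensional subspace $E$ preserved by all $A_i$, ``by the above, the $A_i$ vanish on its orthogonal complement''. Nothing in ``the above'' gives this. Your previous steps only determine the restrictions $A_i|_E$; they say nothing whatsoever about $A_i|_{E^\bot}$. Since $E$ and $E^\bot$ are $A_i$-invariant, it is perfectly consistent with all the relations you have derived that $\mv=E\oplus E^\bot$ with $E^\bot\neq 0$ and the $A_i$ acting non-degenerately on $E^\bot$ (for instance as a quaternionic-type representation of $\so(3)$ on $\R^4$; such $2$-step nilpotent algebras with $\dim\mz=3$ exist). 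Ruling this out is the hardest and longest part of the paper's proof: one must return to \eqref{s21}, contract it against vectors $A_jy$ with $y\in E^\bot$, and derive the relations $A_i^2|_{E^\bot}=-2\lambda^2{\rm Id}$, $A_iA_j|_{E^\bot}=\pm\lambda A_k|_{E^\bot}$ together with anticommutativity, which combine into the contradiction $-2\lambda^2=\lambda^2$ on $E^\bot$ unless all $A_i|_{E^\bot}=0$, whence $E^\bot=0$ by \eqref{int}. Your sketch never tests the equations against vectors outside $E$, and in your closing paragraph you even identify the ``main obstacle'' as the orthogonality/proportionality steps, which are in fact the routine part.

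A second, related problem is your plan to squeeze everything ``out of \eqref{s22} by hand''. Equation \eqref{s22} was obtained from \eqref{s21} by contracting with $A_kx$, and it no longer involves $\alpha_3$; it therefore carries strictly less information. In the paper, both the collinearity statement (via the identity $\beta_j\wedge\xi_{ik}-\beta_i\wedge\xi_{jk}-2\xi_{ij}\wedge\beta_k=0$, i.e. \eqref{s100}) and, crucially, the equality $\lambda_1=\lambda_2=\lambda_3$ use \eqref{s21} itself: the latter comes from projecting \eqref{s21} onto $\Lambda^3E^*$, which brings in the coefficient $\mu$ of $\alpha_3$ on $\Lambda^3E^*$ and yields $\mu\lambda_k+4b_k=0$; combined with $\lambda_kb_k=c$ this forces $\lambda_k^2=-4c/\mu$ for all $k$. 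From \eqref{s311} and \eqref{s22} alone one only gets $\lambda_1b_1=\lambda_2b_2=\lambda_3b_3$, which is compatible with unequal $\lambda_i$, so the ``equal norms of the $\beta_i$'' you aim for cannot be established without using $\alpha_3$. (Your intermediate claim that all three $\beta_i$ are non-zero is true, but the route you indicate for it is not spelled out; the paper avoids needing it as a separate lemma, since it follows from $\lambda_i\neq 0$ once the structure on $E$ is in place.) The final uniqueness argument on $(\mn_{3,2},g_\lambda)$ itself is fine in spirit and close to the paper's conclusion.
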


\begin{proof}
Let $(\mn,g)$ be a 2-step nilpotent metric Lie algebra  with center of dimension $m=3$. 

Let $\alpha$ be a non-zero conformal Killing $3$-form $\alpha$ on $(\mn,g)$ of odd $\mv$-degree. We write as before $\alpha=\alpha_3+\alpha_1$ with $\alpha_3\in\Lambda^3\mv^*$ and 
\begin{equation}\label{eq:alpha11}
\alpha_1=\sum_{s,r=1}^{3}\xi_{sr}\wedge z_s\wedge z_r,
\end{equation}
where $z_1,z_2,z_3$ is an orthonormal basis of $\mz$ and $\xi_{sr}=-\xi_{rs}\in\mv$. We keep using the notation $A_i:=j(z_i)$ and $\beta_i:=-\frac{1}{n-2}A_i\lrcorner\ \alpha_3$.

For each $i,j=1,2,3$ with $i\neq j$, $\beta_i$ and $\beta_j$ cannot vanish simultaneously by Lemma~\ref{lm:bvanish}. Since $\beta_j=A_i\xi_{ij}$ and $\beta_{i}=-A_j\xi_{ij}$ by \eqref{s311}, this shows that the vectors $\xi_{ij}$ are non-zero for every $i\neq j$ in $\{1,2,3\}$. 

Moreover, since \eqref{s27} is symmetric in $i\neq j$, and either $\beta_i$ or $\beta_j$ are non-zero, we obtain
\begin{equation}\label{s500}
g(\xi_{jk},\xi_{ik})=0, \quad \mbox{ for all } i\neq j\neq k\neq i.
\end{equation} 

In particular, the vectors $\xi_{12},\xi_{23}$ and $\xi_{31}$ span a 3-dimensional vector subspace, which we will call $E$, of $\mv$. 

Let  us fix a permutation $(i,j,k)$ of $\{1, 2,3\}$. We will show that $\beta_k$ is collinear to $\xi_{ij}$.
To this purpose, suppose that $\beta_k\neq  0$ (since otherwise the result is trivial), and let us take $x=\xi_{ij}$ in \eqref{s21}. By \eqref{s311} and \eqref{s500}, we obtain
\begin{equation}\label{s100} 0=\sum_{s=1,s\ne k}^{3}(A_s\xi_{ij}\wedge\xi_{sk}-g(\xi_{ij},\xi_{sk})A_s)-2\xi_{ij}\wedge\beta_k=
\beta_j\wedge\xi_{ik}-\beta_i\wedge\xi_{jk}-2\xi_{ij}\wedge\beta_k.
\end{equation} 
 Taking the interior product with $\beta_k$ in \eqref{s100}, and using that $g(\beta_k,\xi_{ik})=g(A_{i}\xi_{ik},\xi_{ik})=0$  and $g(\beta_i,\beta_j)=0$ by \eqref{s23}, we get 
\begin{equation}\label{s51}
g(\xi_{ij},\beta_k)\beta_k=||\beta_k||^2\xi_{ij}.
\end{equation}
Since $\beta_k\neq 0$, this equation and the fact that  $\xi_{ij}\neq0$ imply $g(\xi_{ij},\beta_k)\neq 0$. Hence $\beta_k$ is a multiple of $\xi_{ij}$   by \eqref{s51} as claimed.

In particular, we obtain that $\beta_1,\beta_2,\beta_3$ belong to the 3-dimensional subspace $E\subset \mv$ spanned by the vectors $\xi_{ij}$, so by \eqref{s311} we see that $E$, and hence also its orthogonal $E^\bot$, are invariant under the skew-symmetric endomorphisms $A_i$, $i=1,2,3$.

From now on, we will assume that $(i,j,k)$ is an even permutation of $\{1,2,3\}$. Consider an orthonormal basis $f_1,f_2,f_3$ of $E$ such that $\xi_{ij}=a_kf_k$, for some $a_k\in \R^*$. Since $\beta_k$ is a multiple of $\xi_{ij}$ we then have
\begin{equation}\label{s54}
\beta_k=b_{k} f_k, \quad \mbox{ for some } b_k\in \R. 
\end{equation}

For each $i=1,2,3$, we have $a_i A_i f_i=A_{i}\xi_{jk}=0$ by \eqref{s311}, so 
\begin{equation}
\label{s53} A_i|_{E}=\lambda_i f_j\wedge f_k, \quad \mbox{ for some }\lambda_i\in \R.
\end{equation}
Note that $\lambda_i\ne0$ since otherwise $\beta_j=A_i\xi_{ij}$ and $\beta_{k}=A_i\xi_{ik}$ would both vanish, which we have seen that it was impossible.
By changing the sign of the elements in the basis $z_1,z_2,z_3$ of $\mz$ if necessary, we may thus assume that $\lambda_i>0$ for all $i=1,2,3$.

Now using \eqref{s311}, \eqref{s54} and \eqref{s53} we obtain, for every even permutation $(i,j,k)$ of $\{1,2,3\}$:
\begin{eqnarray*}
&b_jf_j=\beta_j=A_i\xi_{ij}=\lambda_i a_k\, f_k\lrcorner\ (f_j\wedge f_k)=-\lambda_i a_k f_j.\\
&b_if_i=\beta_i=-A_j\xi_{ij}=-\lambda_j a_k\, f_k\lrcorner\ (f_k\wedge f_i)=-\lambda_j a_k f_i.
\end{eqnarray*}
This implies \begin{equation}\label{s90}a_k=-b_j/\lambda_i=-b_i/\lambda_j,\end{equation}  for every even permutation $(i,j,k)$ of $\{1,2,3\}$.  Therefore there exists $c\in\R$ such that
\begin{equation}\label{s55}
\lambda_ib_i=\lambda_jb_j=\lambda_kb_k=c.
\end{equation}

Since $A_jf_j=0$ by \eqref{s53} and $g(f_j,\xi_{jk})=a_ig(f_j,f_i)=0$, \eqref{s21} for $x=f_j$ gives
\begin{equation*}
A_kf_j\lrcorner\ \alpha_3+A_if_j\wedge\xi_{ik}-g(f_j,\xi_{ik})A_i-2 f_j\wedge\beta_k=0.
\end{equation*} 
From this equation and using \eqref{s53} again we obtain
\begin{equation}\label{s56}
-\lambda_k f_i\lrcorner\ \alpha_3+A_if_j\wedge\xi_{ik}-g(f_j,\xi_{ik})A_i-2 f_j\wedge\beta_k=0.
\end{equation}
Since $A_if_j\wedge\xi_{ik}-2 f_j\wedge\beta_k$ and $A_i$ are elements of $\Lambda^2E^*\oplus \Lambda^2(E^\bot)^*\subset \Lambda^2\mv^*$, the above relation shows that $x\lrcorner\ \alpha_3\in \Lambda^2E^*\oplus \Lambda^2(E^\bot)^*$ for every $x\in E$, thus implying that $\alpha_3\in \Lambda^3E^*\oplus (E^*\otimes \Lambda^2(E^\bot)^*)$. Furthermore, since $\dim E=3$, one can write
\begin{equation}\label{s57}
\alpha_3=\mu f_1\wedge f_2\wedge f_3+\gamma,\quad \mbox{ for some }\mu\in\R\;\mbox{ and } \;\gamma \in E^*\otimes \Lambda^2(E^\bot)^*.
\end{equation}

Projecting the equality in \eqref{s56} onto $\Lambda^3E^*$, and using \eqref{s53} together with $\xi_{ik}=-a_jf_j$, we obtain
\begin{equation*}
-\mu \lambda_k f_j\wedge f_k-a_j\lambda_if_k\wedge f_j+a_j\lambda_if_j\wedge f_k-2 b_k f_j\wedge f_k=0.
\end{equation*}
In view of  \eqref{s90}, this is equivalent to 
\begin{equation}
\label{s58}
\mu \lambda_k +4 b_k=0,\quad\mbox{ for all }  k\in\{1,2,3\}.
\end{equation}

Equations \eqref{s58} and \eqref{s55} imply 
$\mu \lambda_k^2=-4b_k\lambda_k=-4c$ so from this, together with \eqref{s90}, we get $
\lambda_k^2=-4\frac{c}{\mu}$ for $k=1,2,3$. 
In particular, since $\lambda_s>0$ for all $s=1,2,3$, we obtain $\lambda_i=\lambda_j=\lambda_k=:\lambda$. Consequently, $f_1,f_2,f_3$ is an orthonormal basis of $E\subset \mv$ whose Lie brackets, by \eqref{s53}, verify
\begin{equation}\label{eq:brackE}
[f_i,f_j]=\sum_{r=1}^3 g(A_rf_i,f_j) z_r=\lambda z_k,
\end{equation}
whenever $(i,j,k)$ is an even permutation of $\{1,2,3\}$.

Before proceeding, notice that we can now express the parameters above in terms of $c$ and $\lambda$. Indeed,  \eqref{s58}, \eqref{s55} and \eqref{s90}, yield
\begin{equation}\label{eq:coeff}
\mu=-4\frac{c}{\lambda^2},\quad b_k=\frac{c}{\lambda},
 \quad a_k=-\frac{c}{\lambda^2},\quad  \mbox{ for  all } k=1,2,3.
\end{equation}
By \eqref{eq:alpha11}, \eqref{eq:coeff}, \eqref{s57} and the  fact that $\xi_{ij}=a_kf_k$ for every even permutation $(i,j,k)$ of $\{1,2,3\}$, we obtain that the component of $\alpha$ in $\Lambda^3(E\oplus \mz)^*$ is
\begin{equation}\label{eq:alphcan}
\alpha_{\Lambda^3(E\oplus \mz)^*}=-2\frac{c}{\lambda^2}(f_3\wedge z_1\wedge z_2+f_2\wedge z_3\wedge z_1+f_1\wedge z_2\wedge z_3+2f_1\wedge f_2\wedge f_3).
\end{equation}
From \eqref{eq:brackE} and \eqref{eq:alphcan}, it is easy to check that $(\mn,g)$ has a Lie subalgebra isometrically isomorphic to $(\mn_{3,2},g_\lambda)$. Indeed, $\varphi:\mn_{3,2}\lra \mn$ defined by $\varphi(v_i)=f_i$, $\varphi(u_i)=\lambda z_i$ is an injective Lie algebra morphism, which is an isometry on its image $E\oplus \mz$.

We will show below that $E^\bot=0$ and once this is proven, the result will follow. Indeed, on the one hand $E^\bot=0$ implies that $\varphi$ is actually onto, so $(\mn,g)$ is isometrically isomorphic to $(\mn_{3,2},g_\lambda)$. On the other hand, $E^\bot=0$ also implies that $\alpha=\alpha_{\Lambda^3(E\oplus \mz)^*}$ and it is given by the right hand side of \eqref{eq:alphcan}. By using \eqref{d} and \eqref{s53} one can check that $\dd\alpha=0$ so, for any possible orientation of $(\mn,g)$, $\ast \alpha$ is a coclosed conformal Killing $3$-form (i.e. Killing). This shows that $\mathcal{CK}^3_{\rm odd}(\mn,g)=\ast \mathcal{K}^3_{\rm ev}(\mn,g)
$ which is 1-dimensional by \cite[Proposition 5.8]{va}.
 
 The rest of the proof aims to show that $E^\bot=0$.  Let $x\in E$ and $y\in E^ \bot$ and $k\neq j$ in $\{1,2,3\}$. Taking the interior product in \eqref{s21} with $A_jy$ we obtain
\begin{equation*}
A_jy\lrcorner A_kx\lrcorner \alpha_3+\sum_{s=1,s\neq k}^3 (g(A_sx,A_jy)\xi_{sk}-g(\xi_{sk},A_jy)A_sx-g(x,\xi_{sk})A_sA_jy)=2 A_jy\lrcorner (x\wedge \beta_k).
\end{equation*}
Since $A_jy\in E^\bot$, this reduces to
\begin{equation}\label{s60}
A_jy\lrcorner A_kx\lrcorner \alpha_3-\sum_{s=1,s\neq k}^3g(x,\xi_{sk})A_sA_jy=0.
\end{equation}
Similarly, by interchanging $x$ with $y$ and $j$ with $k$ when using \eqref{s21}, we get
\begin{equation}\label{s61}
A_kx\lrcorner\ A_jy\lrcorner\ \alpha_3-\sum_{s=1,s\neq j}^3g(\xi_{sj},A_kx)A_sy-2 A_kx\lrcorner (y\wedge \beta_j)=0.
\end{equation}
Adding up \eqref{s60} and \eqref{s61} we get that for every $x\in E$, $y\in E^\bot$, 
\begin{equation*}
-\sum_{s=1,s\neq k}^3g(x,\xi_{sk})A_sA_jy-\sum_{s=1,s\neq j}^3g(\xi_{sj},A_kx)A_sy+2 g(A_kx,\beta_j) y=0.
\end{equation*}
This equation holds for every distinct $j,k\in\{1,2,3\}$. Denoting by $i$ the remaining subscript and developing the terms in each sum, we get
\begin{equation}\label{s64}
-g(x,\xi_{ik})A_iA_jy-g(x,\xi_{jk})A_j^2y-g(\xi_{kj},A_kx)A_ky-2 g(A_k\beta_j,x) y=0.
\end{equation}

Assume first that $(i,j,k)$ is an even permutation of $\{1,2,3\}$. Then  $\xi_{ik}=-a_jf_j$ and  $\xi_{jk}=-\xi_{kj}=a_if_i$, so \eqref{s64} becomes
\begin{equation}\label{s65}
a_j g(x,f_j)A_iA_jy-a_ig(x,f_i)A_j^2y+b_j g(f_j,x)A_ky+2 \lambda_k b_j g(x,f_i) y=0.
\end{equation}
For $x=f_i$ this expression reads $a_i A_j^2y=2 \lambda_k b_j y$, and taking $x=f_j$ in the same equation, we obtain 
$a_j A_iA_jy+b_j A_ky=0$.
These formulas, together with \eqref{eq:coeff}, imply
\begin{equation}\label{s67}
A_i^2|_{E^\bot}=-2\lambda^2 {\rm Id}_{E^\bot},\mbox{ for }i=1,2,3\mbox{ and } A_iA_j|_{E^\bot}=\lambda A_k|_{E^\bot} \mbox{ for }(i,j,k) \mbox{ even}.
\end{equation}

Now, if $(i,j,k)$ is an odd permutation of $\{1,2,3\}$, we have $\xi_{ik}=a_jf_j$, so taking $x=f_j$ in \eqref{s64} yields $a_jA_iA_jy+b_j A_ky=0$ for every $y\in E^\bot$, which reads
\begin{equation}\label{s68}
A_iA_j|_{E^\bot}=-\lambda A_k|_{E^\bot} \mbox{ for }(i,j,k) \mbox{ odd}.
\end{equation}
We can thus conclude, from \eqref{s67} and \eqref{s68}, that $A_iA_j|_{E^\bot}=-A_jA_i|_{E^\bot}$ for all $i\neq j$.

Finally, for $(i,j,k)$ an even permutation of $\{1,2,3\}$ we have, on the one hand 
$$A_i^2A_j|_{E^\bot}=-2\lambda^2 A_j|_{E^\bot}$$ by \eqref{s67}, and on the other hand, $$A_i^2A_j|_{E^\bot}=A_iA_iA_j|_{E^\bot}=-A_iA_jA_i|_{E^\bot}=-\lambda A_iA_k|_{E^\bot}=\lambda^2 A_j|_{E^\bot}.$$ Since $\lambda\ne 0$, this implies that $A_j|E^\bot=0$ for every $j$, so ${E^\bot}=0$ by \eqref{int}.
\end{proof}

Finally, we are in the position to state the main result of the section, which follows immediately from the decompositions in \eqref{ckdec}, \eqref{kdec} and Propositions~\ref{pro:3formsevenv},~\ref{pro:zleq2},~\ref{pro:zgeq4} and~\ref{pro:3formsodd}.

\begin{teo}\label{teo:3forms} 
For any metric $2$-step nilpotent Lie algebra $(\mn,g)$, $\mathcal{CK}^3(\mn,g)=\mathcal{K}^3(\mn,g)$, except when $\mn=\mathfrak{h}_{2q+1}\oplus\mathbb{R}$ and $g$ is any metric on $\mh_{2q+1}\oplus\R$, and when  $(\mn,g)=(\mn_{3,2},g_\lambda)$ for some $\lambda\in\R^+$ where, in both cases,  $\mathcal{CK}^3(\mn,g)$ is $2$-dimensional and $\mathcal{K}^3(\mn,g)$ is $1$-dimensional.
\end{teo}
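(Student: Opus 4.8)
The plan is to assemble the theorem directly from the parity decompositions \eqref{ckdec}--\eqref{kdec} together with the four propositions already established, so that essentially no new computation is required; the only real work is case bookkeeping. First I would record the two structural facts that drive everything. By \eqref{ckdec} and \eqref{kdec} both $\mathcal{CK}^3(\mn,g)$ and $\mathcal{K}^3(\mn,g)$ split as direct sums of their even and odd $\mv$-degree parts, and by \cite[Proposition 5.1]{va} the odd Killing part always vanishes, i.e. $\mathcal{K}_{\rm odd}^3(\mn,g)=0$. Hence $\mathcal{K}^3(\mn,g)=\mathcal{K}_{\rm ev}^3(\mn,g)$ for every $(\mn,g)$, and the problem reduces to comparing $\mathcal{CK}_{\rm ev}^3\oplus\mathcal{CK}_{\rm odd}^3$ with $\mathcal{K}_{\rm ev}^3$.

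Next I would dispose of the generic case. Proposition~\ref{pro:3formsevenv} gives $\mathcal{CK}_{\rm ev}^3=\mathcal{K}_{\rm ev}^3$ unless $\mn\cong\R\oplus\mh_{2q+1}$, while Propositions~\ref{pro:zleq2},~\ref{pro:zgeq4} and~\ref{pro:3formsodd} together show $\mathcal{CK}_{\rm odd}^3=0$ unless $(\mn,g)\cong(\mn_{3,2},g_\lambda)$ -- one simply invokes the relevant proposition according to whether $\dim\mz\le 2$, $\dim\mz\ge 4$, or $\dim\mz=3$. Therefore, whenever $\mn$ is neither a trivial extension $\R\oplus\mh_{2q+1}$ nor $\mn_{3,2}$, I obtain $\mathcal{CK}^3=\mathcal{CK}_{\rm ev}^3\oplus\mathcal{CK}_{\rm odd}^3=\mathcal{K}_{\rm ev}^3\oplus 0=\mathcal{K}^3$, which is the main conclusion.

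It then remains to treat the two exceptional families, and the point requiring the most care -- effectively the only obstacle -- is to verify that in each exceptional case \emph{only one} parity is anomalous, the other remaining in the generic regime. For $\mn=\R\oplus\mh_{2q+1}$ the center has dimension $2$, so Proposition~\ref{pro:zleq2} forces $\mathcal{CK}_{\rm odd}^3=0$, while Proposition~\ref{pro:3formsevenv} gives $\dim\mathcal{CK}_{\rm ev}^3=2$ and $\dim\mathcal{K}_{\rm ev}^3=1$; adding the vanishing odd parts yields $\dim\mathcal{CK}^3=2$ and $\dim\mathcal{K}^3=1$. For $(\mn,g)=(\mn_{3,2},g_\lambda)$ the center has dimension $3$, so $\mn_{3,2}\not\cong\R\oplus\mh_{2q+1}$ and Proposition~\ref{pro:3formsevenv} applies in its generic form, giving $\mathcal{CK}_{\rm ev}^3=\mathcal{K}_{\rm ev}^3=\mathrm{span}\{\eta\}$, of dimension $1$; meanwhile Proposition~\ref{pro:3formsodd} gives $\dim\mathcal{CK}_{\rm odd}^3=1$, so $\dim\mathcal{CK}^3=2$ and, since $\mathcal{K}_{\rm odd}^3=0$, $\dim\mathcal{K}^3=1$. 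Finally I would note that the two families are genuinely distinct, their centers having dimensions $2$ and $3$ respectively, so no overlap is possible; this confirms that the stated list of exceptions is both exhaustive and non-redundant, completing the proof.
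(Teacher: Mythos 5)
Your proposal is correct and follows exactly the paper's own argument: the paper deduces Theorem~\ref{teo:3forms} in one sentence from the decompositions \eqref{ckdec}--\eqref{kdec} together with Propositions~\ref{pro:3formsevenv}, \ref{pro:zleq2}, \ref{pro:zgeq4} and~\ref{pro:3formsodd}, which is precisely the assembly you carry out. Your explicit bookkeeping (using $\mathcal{K}_{\rm odd}^3=0$ from \cite[Proposition 5.1]{va}, checking that only one parity is anomalous in each exceptional case, and noting the two exceptional families are distinguished by $\dim\mz$) is an accurate and complete expansion of that same argument.
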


\begin{remark} Revisiting (the proofs of) Propositions~\ref{pro:3formsevenv} and~\ref{pro:3formsodd}, one can see that when the space of conformal Killing $3$-forms on a 2-step nilpotent metric Lie algebra does not coincide with the space of Killing $3$-forms, it can be described more precisely as follows:
\begin{eqnarray*}
&\mathcal{CK}^3(\R\oplus\mh_{2q+1},g)=\mathcal{CK}_{\rm ev}^3(\R\oplus\mh_{2q+1},g)=\mathcal{K}_{\rm ev}^3(\R\oplus\mh_{2q+1},g)\oplus \ast \mathcal{K}_{\rm ev}^{2q-1}(\R\oplus\mh_{2q+1},g),\\
&\mathcal{CK}^3(\mn_{3,2},g_\lambda)=\mathcal{CK}_{\rm ev}^3(\mn_{3,2},g_\lambda)\oplus \mathcal{CK}_{\rm odd}^3(\mn_{3,2},g_\lambda) =\mathcal{K}_{\rm ev}^3(\mn_{3,2},g_\lambda)\oplus \ast \mathcal{K}_{\rm ev}^{3}(\mn_{3,2},g_\lambda).
\end{eqnarray*}
\end{remark}


\begin{thebibliography}{1}


\bibitem{AD} A. Andrada, I. Dotti. {\sl Conformal Killing-Yano $2$-forms.} Differ. Geom. Appl. {\bf 58}, 103--119 (2018).

\bibitem{AD2} A. Andrada, I. Dotti. {\sl Killing–Yano $2$-forms on $2$-step nilpotent Lie groups}. To appear in Geom. Dedicata doi:10.1007/s10711-020-00564-0 (2020).

\bibitem{ACG1} V. Apostolov, D.M.J. Calderbank, P. Gauduchon. {\sl Hamiltonian 2-forms in K\"ahler Geometry I: General Theory}. J. Differential Geom. {\bf 73}, 359--412 (2006).

\bibitem{ACG2} V. Apostolov, D.M.J. Calderbank, P. Gauduchon, C.W. Tonnesen-Friedman. {\sl Hamiltonian $2$-forms in K\"ahler geometry II: Global Classification}. J. Differential Geom. {\bf 68}, 277--345 (2004).

\bibitem{ACG3} V. Apostolov, D.M.J. Calderbank, P. Gauduchon, C.W. Tonnesen-Friedman. {\sl Hamiltonian $2$-forms in K\"ahler geometry III: Extremal metrics and stability}. Invent. Math. {\bf 173}, 547--601 (2008).

\bibitem{BDS} M.L. Barberis, I. Dotti, O. Santillán. {\sl The Killing–Yano equation on Lie groups}. Class. Quantum Gravity {\bf 29} (6), 1--10 (2012).

\bibitem{bms} F. Belgun, A. Moroianu, U. Semmelmann. {\sl Killing Forms on Symmetric Spaces}. Diff. Geom. Appl. {\bf 24}, 215--222 (2006).

\bibitem{va} V.~{del Barco}, A.~{Moroianu}. {\sl Killing forms on $2$-step nilmanifolds}. J. Geom. Anal. {\bf 31}, 863--887 (2021).

\bibitem{va2} V.~{del Barco}, A.~{Moroianu}. {\sl Higher degree Killing forms on $2$-step nilmanifolds}. J. Algebra {\bf 563}, 251--273 (2020).

\bibitem{EB} P.~Eberlein. {\sl Geometry of {$2$}-step nilpotent groups with a left invariant metric}. Ann. Sci. \'Ecole Norm. Sup. (4), {\bf 27} (5), 611--660 (1994).

\bibitem{HO} A.C. Herrera, M. Origlia. {\sl Invariant Conformal Killing-Yano $2$-forms on five dimensional Lie Groups}. arXiv:2012.11054.

\bibitem{Ka} T. Kashiwada. {\sl On conformal Killing tensor}. Natur. Sci. Rep. Ochanomizu Univ. {\bf 19}, 67--74 (1968).

\bibitem{Mil76} J.~{Milnor}. {\sl Curvatures of left invariant metrics on Lie groups.} Adv. Math. {\bf 21}, 293--329 (1976).

\bibitem{a} A. Moroianu. {\sl Killing vector fields with twistor derivative.} J. Differential Geom. {\bf 77}, 149--167 (2007). 

\bibitem{ms1} A. Moroianu, U. Semmelmann. {\sl Twistor Forms on  K\"ahler Manifolds}.  Ann. Scuola Norm. Sup. Pisa {\bf II},  823--845 (2003). 

\bibitem{ms2} A. Moroianu, U. Semmelmann. {\sl Killing Forms on Quaternion-K\"ahler Manifolds}.  Ann. Global Anal. Geom. {\bf 28}, 319--335 (2005). 

\bibitem{ms} A. Moroianu, U. Semmelmann. {\sl Twistor forms on Riemannian products}. J. Geom. Phys. {\bf 58}, 1343--1345 (2008). 

\bibitem{uwe4} P.-A. Nagy, U. Semmelmann. {\sl Conformal Killing forms in Kähler geometry}. arXiv:2003.09184.

\bibitem{uwe3} A. Naveira, U. Semmelmann. {\sl Conformal Killing forms on nearly Kähler manifolds}. Differential Geom. Appl. {\bf 70}, 101628 (2020).

\bibitem{uwe}  U. Semmelmann. {\sl Conformal Killing forms on Riemannian manifolds}. Math. Z. {\bf 245} 503--527 (2003).

\bibitem{uwe1} U. Semmelmann. {\sl Killing forms on ${\rm G}\sb 2$-  and ${\rm Spin}\sb 7$-manifolds}.  J. Geom. Phys.  {\bf 56} (2006), 1752--1766. 

\bibitem{Ta} S. Tachibana. {\sl On  conformal  Killing  tensors  on  Riemannian  manifolds}. Tohoku Math. J. {\bf 21},56--64 (1969).

 \bibitem{Yama} S. Yamaguchi. {\sl On a Killing $p$-form in a compact K{\"a}hlerian manifold}.
  Tensor (N.S.) {\bf 29}, 274--276 (1975). 

\bibitem{Ya} K. Yano. {\sl Some Remarks on Tensor Fields and Curvature}, Ann. of Math. {\bf 55}, 328--347 (1952).

\end{thebibliography}
\end{document}